\documentclass[10pt]{amsart}
\usepackage{graphicx}
\usepackage{amscd}
\usepackage{amsmath}
\usepackage{amsthm}
\usepackage{amsfonts}
\usepackage{amssymb}
\usepackage{mathrsfs}
\usepackage{bm}
\usepackage{enumerate}
\usepackage{amsrefs}
\usepackage{xcolor}
\usepackage[colorlinks, citecolor=blue, linkcolor=red, pdfstartview=FitB]{hyperref}

\usepackage{tikz}
\usepackage{tikz-cd}
\usepackage{caption}
\usetikzlibrary{decorations.markings}
\usepackage{lipsum}
\usepackage{mathrsfs}

\usepackage{marginnote}	
\usepackage{soul} 

\newcommand{\bB}{{\mathbb{B}}}
\newcommand{\bC}{{\mathbb{C}}}

\newcommand{\bE}{{\mathbb{E}}}
\newcommand{\bF}{{\mathbb{F}}}

\newcommand{\bM}{{\mathbb{M}}}
\newcommand{\bN}{{\mathbb{N}}}

\newcommand{\bT}{{\mathbb{T}}}

\newcommand{\bZ}{{\mathbb{Z}}}

  \newcommand{\A}{{\mathcal{A}}}
  \newcommand{\B}{{\mathcal{B}}}
  \newcommand{\C}{{\mathcal{C}}}
  \newcommand{\D}{{\mathcal{D}}}
  
  \newcommand{\F}{{\mathcal{F}}}
  
\renewcommand{\H}{{\mathcal{H}}}

  \newcommand{\K}{{\mathcal{K}}}  
\renewcommand{\L}{{\mathcal{L}}}

\renewcommand{\O}{{\mathcal{O}}}

  \newcommand{\T}{{\mathcal{T}}}

\newcommand{\fB}{{\mathfrak{B}}}

\newcommand{\rC}{\mathrm{C}}

\renewcommand{\phi}{\varphi}

\newcommand{\upchi}{{\raise.35ex\hbox{$\chi$}}}

\newcommand{\ol}{\overline}

\newcommand{\mycomment}[1]{}

\newcommand{\id}{\operatorname{id}}

\newcommand{\acts}{\curvearrowright}

\newtheorem{theorem}{Theorem}[section]
\newtheorem*{theorem*}{Theorem}
\newtheorem{lemma}[theorem]{Lemma}
\newtheorem*{lemma*}{Lemma}
\newtheorem{corollary}[theorem]{Corollary}

\newtheorem{proposition}[theorem]{Proposition}

\theoremstyle{definition}
\newtheorem{example}[theorem]{Example}

\theoremstyle{plain}
\newtheorem{theoremx}{Theorem}

\date{\today}

\author{Adam Dor-On}
\address{Department of Mathematics, University of Haifa, Mount Carmel, Haifa 3103301, Israel}
\email{adoron.math@gmail.com\vspace{-1ex}}

\author{Ian Thompson}
\address{Department of Mathematical Sciences, University of Copenhagen, Universitetsparken 5, 2100 Copenhagen, Denmark}
\email{ian@math.ku.dk\vspace{-1ex}}

\subjclass[2020]{46K50, 47L55, 46L55, 46L05.}

\keywords{Hao--Ng isomorphism, full crossed product, Cuntz--Pimsner algebras, non-self-adjoint operator algebras, C*-envelope, C*-nuclear.}

\thanks{A. Dor-On was partially
supported by an NSF-BSF grant no. 2350543 / 2023695 (respectively), an NSF / BSF grant no. 2452324 / 2024734 (respectively), and a DFG Middle-Eastern collaboration project no. 529300231. I. Thompson was partially supported by an NSERC Postdoctoral Fellowship.}

\title[Obstructions to the full Hao--Ng isomorphism]{Obstructions to the full Hao-Ng isomorphism}
\begin{document}
\begin{abstract}
    We show that the Hao--Ng isomorphism for full crossed products fails to hold. More precisely, for a non-degenerate $\rC^*$-correspondence $X$ over a $\rC^*$-algebra $\A$ and a generalized gauge action $G \curvearrowright X$ by a locally compact group $G$, we isolate three structural obstructions for the canonical commutation $\O_X\rtimes G \cong \O_{X\rtimes G}$ between full crossed product and Cuntz--Pimsner $\rC^*$-algebra. The first concerns hyperrigidity, the second concerns WEP of the coefficient $\rC^*$-algebra $\A$ for trivial actions, and the third concerns the coincidence of full and reduced crossed products by the action of $G$ on $\A$.
\end{abstract}
\maketitle

\section{Introduction}\label{s:intro}

Dynamical systems of $\rC^*$-algebras are a cornerstone to modern operator algebra theory, readily connecting to topological dynamics, mathematical physics, and classification of $\rC^*$-algebras. An important class of $\rC^*$-dynamical systems are quasi-free actions on Cuntz algebras, which have seen intense focus \cites{doplicher1989endomorphisms, doplicher1989new, izumi2004finite, izumi2004finite2} as they are completely determined on the generating isometries. This symbiosis between $\rC^*$-algebra and operator space dynamics has been of an unusual benefit for the purposes of classification of these actions up to cocycle conjugacy, and it begs the question of how to interpret such interactions among broader classes of dynamics on operator algebras.

In this work, we show that the full Hao--Ng isomorphism problem has a negative answer in an especially strong sense. The problem asks when the Cuntz--Pimsner $\rC^*$-algebra $\O_X$ construction commutes with the full crossed product $\rC^*$-algebra. To this end, an action $\alpha:G\acts\O_X$ by a locally compact group $G$ is said to be a generalized gauge action if it naturally arises from an action on the generating $\rC^*$-correspondence $(\A, X)$. This constraint gives rise to a corresponding full crossed product $\rC^*$-correspondence $(\A\rtimes_\alpha G, X\rtimes_\alpha G)$. The full Hao--Ng isomorphism then asks if the Cuntz--Pimsner algebra $\O_{X\rtimes_\alpha G}$ is the full crossed product of the Cuntz--Pimsner algebra $\O_X \rtimes_\alpha G$.

The Hao--Ng isomorphism problem for amenable groups was resolved by Hao and Ng around 18 years ago \cite{hao2008crossed}. This isomorphism problem has since been closely interwoven with functoriality and crossed product duality \cites{kaliszewski2013functoriality, kaliszewski2015coactions, bedos2015new}, and has seen applications in various contexts \cites{deaconu2012group, deaconu2018group, schafhauser2015cuntz}. Furthermore, non-self-adjoint techniques for establishing the \emph{reduced} Hao--Ng isomorphism have seen consistent improvements and extensions over the last decade \cites{katsoulis2017c, katsoulis2021non, kakariadis2025fock, brix2026normal, paraskevas2026reduced}, with the first solution accomplished by the authors in the past year \cite{dor2025hao} (see \cite[Section 6]{mukohara2026stabilization} as well, for a $\rC^*$-algebraic proof). Nevertheless, the Hao--Ng isomorphism for full crossed products has not witnessed the same level of progress. B\'edos, Kaliszewski, Quigg, and Spielberg verified the isomorphism for row-finite graph correspondences by discrete groups \cite[Corollary 6.8 and Remark 6.10]{bedos2018finitely}, and Katsoulis and Ramsey extended their result to general locally compact groups \cite{katsoulis2021non}. However, beyond reformulations of the problem  \cites{bedos2015new,katsoulis2019crossed,katsoulis2021non}, the full Hao--Ng isomorphism has remained largely mysterious.

Here, we present the first counterexamples to the Hao--Ng isomorphism for full crossed products, showing distinct obstructions to the validity of this isomorphism. As in \cite{dor2025hao}, our work is inspired by the strategy of establishing Hao--Ng isomorphisms through the more general theory of non-self-adjoint operator algebras and the $\rC^*$-envelope, which emerged from the work of Katsoulis and Ramsey \cites{katsoulis2017c, katsoulis2019crossed, katsoulis2021non}. Katsoulis and Ramsey's program was concerned with the general development of non-self-adjoint dynamical systems, with a central goal of determining the commutation of the $\rC^*$-envelope with either the full or reduced crossed products, and thereby resolving the Hao--Ng isomorphism problems in full and reduced crossed products. This led them to prove that the full Hao--Ng isomorphism is equivalent to the commutation between the $\rC^*$-envelope and full crossed product for the corresponding \emph{tensor algebra} dynamical system \cite[Theorem 4.9]{katsoulis2021non}, which is a particular subalgebra of the Cuntz--Pimsner $\rC^*$-algebra.

However, in \cite[Theorem 5.6]{harris2019crossed} Harris and Kim constructed a non-self-adjoint dynamical system, which is not a tensor algebra dynamical system, for which the full crossed product does not commute with the $\rC^*$-envelope. Full crossed products of non-self-adjoint dynamical systems are necessarily constructed from the full crossed product of the so-called maximal $\rC^*$-cover of the non-self-adjoint operator algebra. Unfortunately, the structure of the maximal $\rC^*$-cover can be very mysterious, even in well-studied examples \cite[Example 2.3]{blecher1999modules}. Nevertheless, as a byproduct of \cite[Theorem 4.9]{katsoulis2021non}, it was shown that the full crossed product of a tensor algebra must coincide with the corresponding construction arising from the full crossed product of the $\rC^*$-envelope for the full Hao--Ng isomorphism to be valid. For \emph{hyperrigid} $\rC^*$-correspondences, which are those $\rC^*$-correspondences $(\A, X)$ where Katsura's ideal acts non-degenerately on $X$ \cite{kim2021hyperrigidity}, the converse was also shown to hold.

The first counterexample we present to the full Hao--Ng isomorphism concerns $\rC^*$-correspondences that fail to be hyperrigid. In Theorem \ref{t:infinite-cuntz}, through a concrete construction, we show that the absence of hyperrigidity can invoke a strong failure of the Hao--Ng isomorphism for full crossed products.

\begin{theoremx}\label{tx:O-inf}
    Let $\A = \mathbb{C}$ and $X = \ell^2(\mathbb{N})$, so that $\mathcal{O}_X = \mathcal{O}_{\infty}$. For every countable non-amenable discrete group $G$, there is a generalized gauge action $\alpha:G\acts(\A, X)$ so that the induced generator-preserving surjective $*$-homomorphism $\O_X\rtimes_\alpha G \rightarrow \O_{X\rtimes_\alpha G}$ fails to be injective.
\end{theoremx}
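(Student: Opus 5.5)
Since $\O_\infty$ is the universal C*-algebra of countably many isometries with mutually orthogonal ranges, I will look for an action whose induced $X\rtimes_\alpha G$ is hyperrigid while $X$ is not. Then $\O_{X\rtimes_\alpha G}$ is forced to be small, whereas $\O_X\rtimes_\alpha G$ stays universal.

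Here is the concrete choice. Take a unitary representation $u:G\to \mathcal U(\ell^2(\mathbb N))$ and let $G$ act trivially on $\A=\mathbb C$ and by $u$ on $X$. This is a generalized gauge action: $\alpha_g(S_\xi)=S_{u_g\xi}$ on $\O_\infty$. Then $\A\rtimes G=\rC^*(G)$, and $X\rtimes_\alpha G$ is a correspondence over $\rC^*(G)$ built from $\ell^2(\mathbb N)\otimes \rC^*(G)$ with left action twisted by $u$. I will take $u=\lambda^{\oplus\infty}$, or simply $u=\lambda$ on $\ell^2(G)\cong\ell^2(\mathbb N)$. By Fell absorption, the left action of $\rC^*(G)$ on $X\rtimes G$ then factors through $\rC^*_r(G)$. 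Concretely, $\lambda_g\otimes \lambda_g\sim \lambda_g\otimes 1$ at the level of modules, so $X\rtimes_\lambda G\cong \ell^2(G)\otimes \rC^*(G)$ with left action $\lambda\otimes 1$ after a unitary change.

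The first key step is to identify Katsura's ideal $J_{X\rtimes G}$. The left action has kernel $\ker(\rC^*(G)\to\rC^*_r(G))$, which is nonzero since $G$ is non-amenable. Moreover, $\lambda(\rC^*(G))$ meets the compacts $\K(\ell^2(G))\otimes \rC^*(G)$ only in $0$ when $G$ is infinite; since $G$ is non-amenable it is infinite. Hence $J_{X\rtimes G}=0$, and $\O_{X\rtimes G}=\mathcal T_{X\rtimes G}$ is the Toeplitz algebra. In particular $\rC^*(G)$ embeds in $\O_{X\rtimes G}$, while the generators $S_\xi$ give $\O_{X\rtimes G}$ a quotient $\mathcal T_{X\rtimes G}\to\O_{X_r}$ over $\rC^*_r(G)$.

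The second key step is to find an element of $\O_\infty\rtimes G$ that is nonzero yet is killed by the canonical map $\O_\infty\rtimes G\to\O_{X\rtimes G}$. Take $x\in\ker(\rC^*(G)\to\rC^*_r(G))$, $x\neq 0$, and consider $S_\xi x S_\eta^*$, or the element $S_\xi^* (\cdot) S_\xi$ type relations. In $\O_{X\rtimes G}$, $x$ acts as zero from the left on $X\rtimes G$, so $x\cdot(\xi\otimes 1)=0$; this says that $x S_\xi=0$ in the image. In $\O_\infty\rtimes G$, however, I expect $x S_\xi\neq 0$. To see this, build a covariant representation of $(\O_\infty,G,\alpha)$ on which $x$ does not kill $S_\xi$. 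A natural candidate is to take a unitary representation $\pi$ of $G$ on $H$ with $\pi(x)\ne 0$ (for instance the universal one), and represent $\O_\infty$ on the Fock-type space $\bigoplus_n (\ell^2(G))^{\otimes n}\otimes H$ or via Cuntz-type dilations, where $u_g$ acts diagonally as $\lambda^{\otimes n}\otimes\pi$. The orthogonality of the infinitely many isometries lets us avoid the Cuntz relation $\sum S_iS_i^*=1$. We can therefore put a copy of $H$ inside $(1-\sum S_iS_i^*)$ where $G$ acts by $\pi$, and then apply $S_\xi$ to it. We need $x\cdot S_\xi h\neq0$, which requires the covariant unitary on the range of $S_\xi$ to be $\lambda\otimes\pi$ with $\pi$ weakly containing non-reduced representations. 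Fell absorption makes this reduced, though, so this naive attempt collapses. The fix is to exploit non-hyperrigidity: since $J_X=0$ for $\O_\infty$, the element $1-\sum S_iS_i^*$ lives in a part where $G$ can act by an arbitrary $\pi$ independent of $u$. Take $x\in\ker\lambda$ and the element $x(1-S_\xi S_\xi^*)$ multiplied against a vector of the "vacuum" part, and show that it is nonzero in $\O_\infty\rtimes G$ but vanishes in $\O_{X\rtimes G}$.

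The main obstacle is pinning down exactly which element dies in $\O_{X\rtimes G}$ while surviving in the full crossed product. Here $\O_{X\rtimes G}$'s coefficient algebra is all of $\rC^*(G)$ with no relation killing the vacuum. The concrete computation of $\O_{X\rtimes G}$ via the Fell-absorbed form $\ell^2(G)\otimes\rC^*(G)$ is where I would spend the effort. The alternative I would try first is to compare the cores (fixed-point algebras of the gauge action) at level one, i.e.\ $\K(X)\rtimes G$ versus $\K(X\rtimes G)$, and show the latter is $\K(\ell^2 G)\otimes \rC^*_r(G)$ rather than $\otimes\rC^*(G)$.
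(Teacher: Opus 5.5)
There is a genuine gap. With $u=\lambda$ (or $\lambda^{\oplus\infty}$) the map $\O_X\rtimes_\alpha G\to\O_{X\rtimes_\alpha G}$ is injective, so no element of the kind you are looking for exists. Since $J_X=0$ you have $\O_X=\T_X$, and the Toeplitz form of the full Hao--Ng isomorphism \cite[Theorem 3.1]{bedos2015new} gives a generator-preserving isomorphism $\O_X\rtimes_\alpha G\cong\T_{X\rtimes_\alpha G}$ for every generalized gauge action. Hence the map in question is just the quotient $\T_{X\rtimes_\alpha G}\to\O_{X\rtimes_\alpha G}$. Its kernel is $\K(\F_{X\rtimes_\alpha G}J_{X\rtimes_\alpha G})$ \cite[Proposition 6.5]{katsura2004c}, so it fails to be injective exactly when $J_{X\rtimes_\alpha G}\neq 0$. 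Your correct computation that $J_{X\rtimes_\alpha G}=0$ for $u=\lambda$ therefore proves the opposite of what you want.

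Concretely, your candidate $xS_\xi$ with $x\in\ker\lambda$ is already $0$ in $\O_\infty\rtimes_\alpha G$. This is because $X\rtimes_\alpha G$ is by definition the closure of $\rC_c(G;X)$ inside $\T_X\rtimes_\alpha G$, with left action given by multiplication. The map $y\mapsto S_\xi^*yS_\xi$ on $\rC^*(G)$ is the multiplier $u_g\mapsto\langle\xi,\lambda_g\xi\rangle u_g$, which factors through $\lambda$ by Fell absorption, so $S_\xi^*x^*xS_\xi=0$. Your fallback element $x(1-S_\xi S_\xi^*)$ then simply equals $x$, which is nonzero in both algebras. The core comparison fails as well: $X\rtimes_\alpha G\cong\ell^2(G)\otimes\rC^*(G)$ as a right module, so $\K(X\rtimes_\alpha G)=\K(\ell^2(G))\otimes\rC^*(G)$, not $\K(\ell^2(G))\otimes\rC^*_\lambda(G)$.

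The missing idea is to choose $U$ so that the left action $U\otimes u$ of $\rC^*(G)$ is faithful yet sends some nonzero element into the compacts. What you need is $J_{X\rtimes_\alpha G}\neq 0$, not hyperrigidity of $X\rtimes_\alpha G$. The paper takes $X=\bC\xi_0\oplus\ell^2(G)$ with $U=1_G\oplus\lambda$:
\begin{enumerate}[{\rm (i)}]
\item The invariant vector $\xi_0$ makes $\varphi$ faithful, since on $\bC\xi_0\otimes\rC^*(G)\cong\rC^*(G)$ the algebra $\rC^*(G)$ acts by left multiplication.
\item Non-amenability yields a positive $a\in\rC^*(G)$ with $1_G(a)=1$ and $\lambda(a)=0$.
\item By Fell absorption $a$ acts as zero on $\ell^2(G)\otimes\rC^*(G)$, while on $\bC\xi_0\otimes\rC^*(G)$ it acts by left multiplication by $a$.
\end{enumerate}
Hence $\varphi(a)=\theta_{\xi_0\otimes a^{1/2},\,\xi_0\otimes a^{1/2}}$ is compact and $0\neq a\in J_{X\rtimes_\alpha G}$. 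The ``room where $G$ acts by a non-reduced representation'' you were groping for must sit inside $X$ itself, as a fixed vector, so that the left action on $X\rtimes_\alpha G$ sees it. Placing it in the vacuum of a covariant representation of $\O_\infty$ does not change $J_{X\rtimes_\alpha G}$.
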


Accordingly, we turn our attention towards the question of the full Hao--Ng isomorphism for hyperrigid $\rC^*$-correspondences. We uncover a second obstruction for trivial actions by free groups in Theorem \ref{t:coeff-obs}. This confirms the suspicion of a few works \cites{bedos2015new, katsoulis2019crossed}, showing that the full Hao--Ng isomorphism for trivial actions is connected with the weak expectation property. For this, we require a characterization of nuclearity-related properties for tensor algebras. We then construct dynamical systems of hyperrigid $\rC^*$-correspondences that witness this obstruction in Example \ref{e:tens}, thereby refuting the full Hao--Ng isomorphism for trivial actions.

\begin{theoremx}\label{tx:hr-triv}
    Let $X$ be a hyperrigid $\rC^*$-correspondence over the $\rC^*$-algebra $\A$, so that $\O_X$ has the WEP, yet $\A$ does not have WEP. Let $\bF$ be a discrete non-abelian free group. If $\id$ denotes the trivial action of $\bF$ on $(\A, X)$, the induced generator-preserving surjective $*$-homomorphism $ \O_{X\rtimes_{\id} \bF} \rightarrow\O_X\rtimes_{\id}\bF$ fails to be injective.
\end{theoremx}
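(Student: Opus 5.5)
We argue by contradiction: assuming the canonical surjection $\O_{X\rtimes_{\id}\bF}\to\O_X\rtimes_{\id}\bF$ is injective, we will force $\A$ to have the WEP.

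\textbf{Identifying the two sides.} For the trivial action, $\A\rtimes_{\id}\bF\cong \A\otimes_{\max}\rC^*(\bF)$ and $X\rtimes_{\id}\bF\cong X\otimes_{\max}\rC^*(\bF)$ as a correspondence over it. Likewise $\O_X\rtimes_{\id}\bF\cong\O_X\otimes_{\max}\rC^*(\bF)$. Hyperrigidity of $X$ should pass to $X\otimes\rC^*(\bF)$: Katsura's ideal of the tensor correspondence contains $J_X\otimes\rC^*(\bF)$, which still acts non-degenerately. By the Katsoulis--Ramsey reformulation quoted in the introduction \cite[Theorem 4.9]{katsoulis2021non} and its converse for hyperrigid correspondences, injectivity of the canonical map is then equivalent to the statement that the full crossed product of the tensor algebra $\T_X^+$ computed in the maximal cover coincides with the one computed in the envelope $\O_X$. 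Concretely, the closure of $\T_X^+\otimes\rC^*(\bF)$ inside $\O_X\otimes_{\max}\rC^*(\bF)$ must be completely isometric to its image in $\rC^*_{\max}(\T^+_X)\otimes_{\max}\rC^*(\bF)$.

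\textbf{Passing to the coefficient algebra.} The point is that $\A\subseteq\T_X^+$, and $\A\otimes_{\max}\rC^*(\bF)$ embeds in $\rC^*_{\max}(\T_X^+)\otimes_{\max}\rC^*(\bF)$. To see this, note that $\A$ is a unital or nondegenerate $\rC^*$-subalgebra of $\T^+_X$, and there is a completely contractive conditional expectation onto the zeroth gauge-degree part, $\T_X^+\to\A$. This expectation extends via the maximal cover, which gives an injectivity of max tensors. Combining this with the assumed isomorphism, the inclusion $\A\subseteq\O_X$ induces an isometric map
\[\A\otimes_{\max}\rC^*(\bF)\to\O_X\otimes_{\max}\rC^*(\bF).\]
Since $\O_X$ has the WEP, Kirchberg's theorem gives $\O_X\otimes_{\max}\rC^*(\bF)=\O_X\otimes_{\min}\rC^*(\bF)$ for a non-abelian free group, so the restriction to $\A$ is the minimal norm. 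Hence $\A\otimes_{\max}\rC^*(\bF)=\A\otimes_{\min}\rC^*(\bF)$. We may reduce to $\bF_\infty$, or use that $\rC^*(\bF_2)$ contains $\rC^*(\bF_\infty)$ and the statement passes to subalgebras. By Kirchberg's characterization of the WEP via $\rC^*(\bF_\infty)$, $\A$ then has the WEP, which is a contradiction.

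\textbf{Main obstacle.} The delicate step is the middle one: showing that the max-norm on $\A\otimes\rC^*(\bF)$ is really seen inside the full crossed product of the tensor algebra. This requires two things. First, the ``nuclearity-related characterization for tensor algebras'' mentioned before the statement. Second, some control of $\rC^*_{\max}(\T^+_X)$. I would first try to use the Fock representation, or the universal property of $\T_X$ (the Toeplitz algebra), as a concrete cover. Any pair of commuting representations of $\A$ and $\rC^*(\bF)$ can be dilated, via an induced Fock-type construction, to commuting representations of $\T_X^+$ and $\rC^*(\bF)$, which extend to the max cover. Hyperrigidity is what lets the envelope-side crossed product coincide with $\O_X\otimes_{\max}\rC^*(\bF)$ rather than some smaller quotient.
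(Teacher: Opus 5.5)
Your argument is correct and is essentially the paper's proof. You reduce via Katsoulis--Ramsey's Theorem 4.9 to showing that $\T_X^+\rtimes_{\id}\bF\to\T_X^+\rtimes_{\O_X,\varepsilon,\id}\bF$ is not completely isometric, identify the two sides with the max and min tensor products with $\rC^*(\bF)$ (the min one using the WEP of $\O_X$), and descend to $\A$ through $\bE:\T_X^+\to\A$; this last step is exactly Lemma \ref{l:tens-max-incl}(i)--(ii) and Proposition \ref{p:tens-wep}, which you have simply inlined into a contradiction argument. Note that what lets $\bE$ extend to $\rC^*_{\max}(\T_X^+)$ is that it is a completely contractive \emph{homomorphism} on $\T_X^+$, which makes the dilation idea in your ``main obstacle'' paragraph unnecessary; also, hyperrigidity is used only through Theorem 4.9, not to identify the envelope-side crossed product with $\O_X\otimes_{\max}\rC^*(\bF)$.
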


In Theorem \ref{t:act-obs}, we then uncover a third obstruction to the full Hao-Ng isomorphism for hyperrigid $\rC^*$-correspondences by non-trivial actions. A family of regular topological graphs that meets this criterion is then constructed in Example \ref{e:top-graph}.

\begin{theoremx}\label{tx:hr-obs}
    Let $X$ be a hyperrigid $\rC^*$-correspondence over the $\rC^*$-algebra $\A$, and let $\alpha : G \acts (\A,X)$ be a generalized gauge action. If the canonical map $\O_X\rtimes_\alpha G \rightarrow \O_X\rtimes_{\alpha, r} G$ is injective yet $\A\rtimes_\alpha G \rightarrow \A \rtimes_{\alpha,r} G$ is not, then the induced generator-preserving surjective $*$-homomorphism $\O_{X\rtimes_\alpha G}\longrightarrow \O_X\rtimes_\alpha G$ fails to be injective.
\end{theoremx}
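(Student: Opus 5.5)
Assume, for contradiction, that the map $\Phi:\O_{X\rtimes_\alpha G}\to \O_X\rtimes_\alpha G$ is injective, and hence a $*$-isomorphism. Compose it with the canonical surjection $\Lambda:\O_X\rtimes_\alpha G\to\O_X\rtimes_{\alpha,r}G$, which is injective by hypothesis. By the reduced Hao--Ng isomorphism established in \cite{dor2025hao}, $\O_X\rtimes_{\alpha,r}G\cong \O_{X\rtimes_{\alpha,r}G}$ via a generator-preserving map. The composite then identifies $\O_{X\rtimes_\alpha G}$ with $\O_{X\rtimes_{\alpha,r}G}$ by a $*$-isomorphism that is generator preserving. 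On the coefficient algebra, this isomorphism must restrict to the map induced by the canonical quotient $q:\A\rtimes_\alpha G\to \A\rtimes_{\alpha,r}G$ on the copies of the coefficient algebras sitting inside the Cuntz--Pimsner algebras.

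The goal is to show that the coefficient algebra embeds faithfully in each Cuntz--Pimsner algebra. For any $\rC^*$-correspondence $(\B,Y)$, the universal Cuntz--Pimsner covariant representation is injective on $\B$; this is Katsura's construction, which uses his ideal $J_Y=\phi_Y^{-1}(\K(Y))\cap(\ker\phi_Y)^\perp$. So $\A\rtimes_\alpha G\hookrightarrow\O_{X\rtimes_\alpha G}$ and $\A\rtimes_{\alpha,r}G\hookrightarrow \O_{X\rtimes_{\alpha,r}G}$ are both injective. The maps are compatible, that is, $\Lambda\circ\Phi$ restricted to $\A\rtimes_\alpha G$ equals $q$ followed by the inclusion. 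Hence $q$ would be injective, contradicting the hypothesis. Notice that in this route hyperrigidity and the full Hao--Ng machinery are not used at all, beyond the reduced Hao--Ng isomorphism.

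The main obstacle is checking compatibility. Concretely, I need the generator-preserving maps to really send the canonical copy of $\A\rtimes_\alpha G$ (generated by $\A$ and the unitaries $u_g$, or by $C_c(G,\A)$) to the corresponding copy in the reduced picture. I also need the reduced Hao--Ng isomorphism to hold in the generality needed here: arbitrary locally compact $G$ and non-degenerate $X$. Both should be routine by checking on generators $C_c(G,\A)$ and $C_c(G,X)$.

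If the reduced Hao--Ng theorem were unavailable in this generality, I would fall back on the hyperrigidity route. By \cite[Theorem 4.9]{katsoulis2021non} and hyperrigidity, I would compare the full crossed product of the tensor algebra computed via the C*-envelope $\O_X$ with the one computed via the maximal cover. From injectivity of $\Lambda$, I would deduce that $\T_X^+\rtimes_\alpha G$ sits inside a reduced crossed product, and hence that its diagonal $\A\rtimes_\alpha G$ coincides with $\A\rtimes_{\alpha,r}G$, again a contradiction.
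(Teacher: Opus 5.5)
Your argument is correct, but it takes a different route from the paper.

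\textbf{The paper's route.} It stays in the Katsoulis--Ramsey framework. By hyperrigidity and \cite[Theorem 4.9]{katsoulis2021non}, it reduces to showing that $\T_X^+\rtimes_\alpha G\to\T_X^+\rtimes_{\O_X,\varepsilon,\alpha}G$ is not completely isometric. Injectivity of $\O_X\rtimes_\alpha G\to\O_X\rtimes_{\alpha,r}G$ together with \cite[Corollary 3.16]{katsoulis2019crossed} identifies the target with $\T_X^+\rtimes_{\alpha,r}G$. The faithful copy of $\A\rtimes_\alpha G$ in $\T_X^+\rtimes_\alpha G$ from \cite[Theorem 4.7]{katsoulis2021non}, plus a commuting square, then transports the non-injectivity of $q$. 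This is essentially your fallback.

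\textbf{Your route.} You work directly with the Cuntz--Pimsner algebras. Katsura's theorem gives a faithful copy of the coefficient algebra in $\O_{X\rtimes_\alpha G}$, and $\Lambda\circ\Phi$ restricted to it factors through $q$. The mechanism is the same in both proofs: the full crossed product of $\A$ survives in $\O_{X\rtimes_\alpha G}$ but is forced down to the reduced one in $\O_X\rtimes_\alpha G=\O_X\rtimes_{\alpha,r}G$.

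\textbf{What each buys.} Your version avoids the tensor-algebra machinery. It also makes visible that hyperrigidity is used only to produce $\Phi$, via \cite[Theorem 3.6]{katsoulis2021non}. The paper's version instead pins the failure on condition (\ref{eq:ind}), the dependence of the full crossed product of $\T_X^+$ on the $\rC^*$-cover. That is the structural point of the subsection.

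\textbf{Two refinements.} First, the reduced Hao--Ng isomorphism of \cite{dor2025hao} is superfluous, so your worry about its generality disappears. Injectivity of $\iota\circ q$ forces injectivity of $q$ for any $\iota$. Moreover, $\Lambda\circ\Phi$ sends $C_c(G;\A)$ into its closure in $\O_X\rtimes_{\alpha,r}G$. That closure is $\A\rtimes_{\alpha,r}G$, because reduced crossed products respect the (Katsura-faithful) inclusion $\A\subset\O_X$.

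Second, you should state one input explicitly rather than take it from notation. The coefficient algebra of $X\rtimes_\alpha G$ is defined as the closure of $C_c(G;\A)$ in $\T_X\rtimes_\alpha G$, and you need it to be the full crossed product $\A\rtimes_\alpha G$. This is exactly what the paper draws from \cite[Theorem 4.7]{katsoulis2021non}. It also follows from the $G$-equivariant conditional expectation $\T_X\to\A$ obtained by compressing to the zeroth summand of $\F_X$.
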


We now present the structure of this paper. In Section \ref{s:prelim}, we give background on non-self-adjoint operator algebra theory, as well as algebras formed from $\rC^*$-correspondences, and dynamics. In Section \ref{s:nuc-prop}, we provide some useful characterizations of nuclearity, exactness and the weak expectation property for tensor algebras. In Section \ref{s:Hao--Ng}, we present our obstructions to the Hao--Ng isomorphism for full crossed products. In Subsection \ref{ss:O-inf}, we show the failure of the full Hao--Ng isomorphism for non-hyperrigid $\rC^*$-correspondences. In Subsection \ref{ss:hr-Hao--Ng}, we establish counterexamples to the full Hao--Ng isomorphism for hyperrigid $\rC^*$-correspondences, together with nuclearity-related properties for tensor algebras.\\

\textbf{Acknowledgments.} The first-named author is grateful to Kevin Aguyar Brix and Chris Bruce for discussions on nuclearity and exactness of $\rC^*$-algebras held at an ICMS meeting in June 2025. The authors thank Elias Katsoulis for informing them that Theorem \ref{tx:O-inf} resolves questions he had considered. 

We announced parts of our work on the full Hao-Ng isomorphism problem in November 2025 (Michigan State University \& Technion), March 2026 (Combinatorial $*$-algebras, MATRIX, Australia), and May 2026 (University of Potsdam). We are grateful to the organizers of these seminars and conferences for the opportunity to present this work. On June 12th, 2026, we contacted Miho Mukohara and Yuhei Suzuki regarding the first version of their pre-print \cite{mukohara2026stabilization} to inform them that we believe the full Hao-Ng isomorphism is false. In the third version of their pre-print \cite{mukohara2026stabilization}, they announced other counterexamples to the full Hao-Ng isomorphism.\\

\textbf{Statement on the use of AI.} ChatGPT 5.5 and Claude Opus 5 were used to help make final edits to this paper. ChatGPT 5.5 was also used to perform preliminary computations, specifically for testing the non-vanishing of Katsura’s ideal for families of crossed product correspondences of $\ell^2(\mathbb{N})$ as a $\mathbb{C}$-correspondence. The authors take full responsibility for the correctness and validity of this work.

\section{Preliminaries}\label{s:prelim}

\subsection{Operator algebras and their C*-algebras} \label{ss:C*-env}

An \emph{operator algebra} is a subalgebra of Hilbert space operators $\A\subset\bB(\K)$. An operator algebra can also be defined abstractly through a set of axioms \cite[Section 16]{paulsen2002completely}. Throughout, we assume all operator algebras are norm-closed and \emph{approximately unital}, in the sense that they possess a contractive approximate unit.

Given operator algebras $\A$ and $\B$, we let $\A\otimes\B$ denote the minimal (i.e.~ spatial) tensor product. The maximal tensor product $\A\otimes_{\max}\B$ is defined by the following universal property: there are completely isometric homomorphisms $\A,\B\hookrightarrow\A\otimes_{\max}\B$ such that whenever $\pi:\A\rightarrow\bB(\H)$ and $\rho:\B\rightarrow\bB(\H)$ are (completely contractive) representations with commuting ranges, there is a unique representation $\pi\otimes_{\max}\rho:\A\otimes_{\max}\B\rightarrow\bB(\H)$ that extends both $\pi$ and $\rho$. We remark that the maximal tensor product of operator algebras does not agree with the maximal tensor product for operator spaces. For more details on tensor products of non-self-adjoint operator algebras, we refer the reader to \cite{paulsen1990tensor},\cite[Sections 1.5 and 6.1]{blecher2004operator}.

We also recall some of Arveson's noncommutative boundary theory \cite{arveson1969subalgebras}. A \emph{$\rC^*$-cover} of $\A$ is a pair $(\fB, \iota)$ consisting of a $\rC^*$-algebra $\fB$ and a completely isometric homomorphism $\iota:\A\rightarrow \fB$ where $\fB = \rC^*(\iota(\A))$. Two canonical $\rC^*$-covers of $\A$ are the \emph{maximal $\rC^*$-cover}, denoted $(\rC^*_{\max}(\A), \mu)$, and the \emph{$\rC^*$-envelope}, denoted $(\rC^*_e(\A), \varepsilon)$. These $\rC^*$-covers have the defining properties that, whenever $(\fB, \iota)$ is a $\rC^*$-cover for $\A$, there are surjective $*$-homomorphisms that make the following diagram commute:
\[\begin{tikzcd}
    & \rC^*_{\max}(\A) \arrow[d, dashed]\\
    \A \arrow[ru, "\mu"] \arrow[r, "\iota"] \arrow[rd, "\varepsilon"] & \fB \arrow[d, dashed]\\
    & \rC^*_e(\A)
\end{tikzcd}\]
For any operator algebra, both of these $\rC^*$-covers are known to always exist \cite{blecher1999modules},\cite{hamana1979injective}.

An operator algebra $\A$ is said to be \emph{hyperrigid} if, whenever $\pi:\rC^*_e(\A)\rightarrow \bB(\H)$ is a faithful non-degenerate $*$-representation and $\psi:\rC^*_e(\A) \rightarrow \bB(\H)$ is a completely contractive completely positive map such that $\psi\circ\varepsilon = \pi\circ\varepsilon$, we have that $\psi = \pi$ \cite{arveson2011noncommutative}. Many of the most well-studied classes of operator algebras have now been verified to be hyperrigid, as can be seen for example in \cites{arveson2011noncommutative,  bilich2025maximality, clouatre2018multiplier, dor2018full, kim2021hyperrigidity}.

\subsection{Non-self-adjoint dynamical systems}\label{ss:nsa-dyn}

Let $\A$ be an operator algebra, $G$ be a locally compact Hausdorff group, and $\alpha:G\acts\A$ be a point-norm continuous action by completely isometric isomorphisms. The triple $(\A, G, \alpha)$ will be called a \emph{dynamical system}. If $\A$ is a $\rC^*$-algebra, we call the triple a \emph{$\rC^*$-dynamical system}.

A $\rC^*$-cover $(\fB, \iota)$ of $\A$ is said to be \emph{$\alpha$-admissible} when there is a continuous action $\widetilde{\alpha}: G\acts\fB$ such that $\widetilde{\alpha} \circ\iota = \iota\circ\alpha$.  When such an action $\widetilde{\alpha}$ exists, it is necessarily unique and so, we simply denote $\widetilde{\alpha}$ by $\alpha$ when the context is clear. Not every $\rC^*$-cover is $\alpha$-admissible \cite[Proposition 2.1]{katsoulis2021non}, but both the maximal $\rC^*$-cover and the $\rC^*$-envelope are $\alpha$-admissible \cite[Lemma 3.4]{katsoulis2019crossed}. Given an $\alpha$-admissible $\rC^*$-cover $(\fB, \iota)$ of $\A$, we define the \emph{full crossed product relative to $(\fB ,\iota)$} to be the norm-closed subalgebra $\A\rtimes_{\fB, \iota,\alpha}G$ of $\fB\rtimes_\alpha G$ that is generated by the canonical copy of $C_c(G;\iota(\A))$. For any $\alpha$-admissible $\rC^*$-cover $(\fB, \iota)$, there are completely contractive homomorphisms with dense ranges
\[
    \A\rtimes_{\rC^*_{\max}(\A), \mu, \alpha} G \longrightarrow \A\rtimes_{\fB, \iota, \alpha} G \longrightarrow \A\rtimes_{\rC^*_e(\A), \varepsilon, \alpha} G
\]
that fix the canonical copies of $\rC_c(G; \A)$ \cite[Theorem 2.4]{katsoulis2021non}. The operator algebra $\A\rtimes_{\fB, \iota,\alpha}G$ is generally dependent on the $\rC^*$-cover $(\fB, \iota)$, with the first such class of examples being discovered in the work of Harris and Kim \cite[Theorem 5.6]{harris2019crossed}. More precisely, there is an operator algebra $\A\subset \bM_r(\mathbb{C})$ such that the following property holds: whenever $G$ is a discrete non-amenable group, the quotient mapping $\A\rtimes_{\rC^*_{\max}(\A), \mu, \id} G \rightarrow \A\rtimes_{\rC^*_e(\A), \varepsilon, \id} G$ is not completely isometric.

The full crossed product relative to the maximal $\rC^*$-cover has an appropriate universal property describing a full crossed product structure for the dynamical system $(\A, G, \alpha)$. Explicitly, a pair $(\pi, u):(\A, G)\rightarrow\bB(\H)$ is said to be \emph{covariant} if $\pi:\A\rightarrow\bB(\H)$ is a completely contractive representation and $u:G\rightarrow\bB(\H)$ is a strongly continuous unitary representation with the property that
\[
    \pi(\alpha_g(a)) = u_g \pi(a)u_g^*, \ \text{for} \ \ g\in G, a\in\A.
\]
The operator algebra $\A\rtimes_{\rC^*_{\max}(\A), \mu, \alpha} G$ has the universal property that any covariant pair $(\pi,u):(\A, G)\rightarrow\bB(\H)$ extends to a completely contractive representation $\pi\rtimes u:\A\rtimes_{\rC^*_{\max}(\A), \mu, \alpha} G\rightarrow\bB(\H)$  \cite[Proposition 3.8]{katsoulis2019crossed}. Accordingly, the algebra
\[
    \A\rtimes_\alpha G := \A\rtimes_{\rC^*_{\max}(\A), \mu, \alpha} G
\]
is defined to be the \emph{full crossed product} of the dynamical system $(\A, G, \alpha)$.

The reduced crossed product is defined in an analogous way. However, contrary to the example of Harris and Kim, the reduced crossed product is independent of the choice of $\alpha$-admissible $\rC^*$-cover \cite[Corollary 3.16]{katsoulis2019crossed}. To describe the reduced crossed product, let $\lambda:G\rightarrow \bB( L^2(G))$ denote the left regular representation and, given a non-degenerate representation $\pi:\A\rightarrow \bB(\H)$, we define $\pi_\alpha: \A\rightarrow L^\infty(G;\bB(\H))$ by $\pi_\alpha(a)(g) = (\pi\circ\alpha_g^{-1})(a)$ for every $g\in G$ and $a\in\A$. The pair $(\pi_\alpha, \id\otimes\lambda)$ is readily seen to be covariant. One may then define the integrated form $\pi_\alpha\rtimes(\id\otimes\lambda)$ on $\rC_c(G;\A)$ by $[\pi_\alpha \rtimes(\id\otimes\lambda)](f) = \int_G \pi_\alpha(f(g))\id\otimes\lambda_g dm(g)$ \cite[Lemma 1.91]{williams2007book}. Now, given any $\alpha$-admissible $\rC^*$-cover $(\fB, \iota)$, the \emph{reduced crossed product} is defined as the closure of the image of $\rC_c(G;\A)$ under $\iota_\alpha \rtimes(\id\otimes\lambda)$. For more details on non-self-adjoint dynamical systems, we refer the reader to \cite{katsoulis2019crossed}.

\subsection{Cuntz--Pimsner C*-algebras and crossed product correspondences}\label{ss:correspondence}

A \emph{$\rC^*$-correspondence} is a pair $(\A, X)$ consisting of a $\rC^*$-algebra $\A$ and a right Hilbert $\A$-module $X$, together with a left action $\varphi_X:\A\rightarrow\L(X)$ by adjointable operators on $X$. Throughout, we assume that $(\A, X)$ is \emph{non-degenerate} in the sense that $X = \ol{\varphi_X(\A)X}$. Given $x,y\in X$, one can define an adjointable operator on $X$ by $\theta_{x,y}(z) = x\langle y, z\rangle$. The norm-closure of the linear span of $\{\theta_{x,y} : x,y\in X\}$ will then be denoted by $\K(X)$, and is an ideal in $\L(X)$ referred to as the generalized compact operators on $X$. For details on the constructions found in this section, one can consult \cite[Section 4.6]{brown2008textrm} and \cite{katsura2004c}.

For a pair of $\rC^*$-correspondences $X,Y$ over $\A$, there is a natural notion of a tensor product $\rC^*$-correspondence $ X\otimes Y$ over $\A$. Consequently, a $\rC^*$-correspondence $(\A, X)$ gives rise to a \emph{Fock correspondence} $\F_X$ over $\A$, which is defined by
\[
    \F_X = \A \oplus \bigoplus_{n=1}^\infty X^{\otimes n}
\]
with $X^{\otimes n}$ denoting the $n$-fold tensor product of $X$. Attached to the Fock correspondence $\F_X$ is a family of left-creation operators $\{L_x\in\L(\F_X) : x\in X\}$ where
\[
    L_x(a) = xa, \quad L_x(x_1\otimes\ldots\otimes x_n) = x\otimes x_1\otimes\ldots \otimes x_n
\]
whenever $a\in\A$ and $x,x_1, \ldots, x_n\in X$. The \emph{Toeplitz $\rC^*$-algebra} $\T_X$ of the $\rC^*$-correspondence $(\A, X)$ is the $\rC^*$-subalgebra of $\L(\F_X)$ generated by $\varphi_{\F_X}(\A)$, which is identified with a copy of $\A$, and the left-creation operators $\{L_x\in\L(\F_X) : x\in X\}$, which is identified with a copy of $X$. The corresponding norm-closed operator algebra generated by $\varphi_{\F_X}(\A)$ and the left-creation operators is called the \emph{tensor algebra} of $X$, which will be denoted by $\T_X^+$. As $(\A, X)$ is assumed to be non-degenerate, the tensor algebra admits a contractive approximate unit from $\A$.

We recall the construction of the Cuntz-Pimsner $\rC^*$-algebra built from $(\A, X)$. To this end, we consider the closed two-sided ideal
\[
    J_X = \{a\in \A ~|~ \varphi_X(a)\in\K(X) \quad \text{and} \quad ab = 0 \ \text{for all} \ b\in\ker\varphi_X\},
\]
which is referred to as \emph{Katsura's ideal}. Given another $\rC^*$-algebra $\B$, a \emph{representation} of $(\A, X)$ is a pair $(\rho, t)$ such that $t:X\rightarrow \B$ is a completely contractive linear map and $\rho:\A\rightarrow \B$ is a (non-degenerate) $*$-homomorphism satisfying
\[
    \rho(a_1)t(x)\rho(a_2) = t(a_1 x a_2), \quad a_1, a_2\in \A, x\in X.
\]
Here, we remark that we identify $\varphi_X:\A\rightarrow\L(X)$ with an action of $\A$ on $X$. When $t(x)^*t(y) = \rho(\langle x, y\rangle)$ for every $x,y\in X$, then we say that $(\rho, t)$ is \emph{rigged} (or \emph{isometric}, as in \cite{muhly1998tensor}). Each rigged representation gives rise to a $*$-homomorphism $\psi_t:\K(X)\rightarrow\B$ where $\psi_t(\theta_{x,y}) = t(x)t(y)^*$ for every $x,y\in X$. If $\psi_t(\varphi_X(a)) = \rho(a)$ for each member $a\in J_X$ of Katsura's ideal, then $(\rho, t)$ is said to be \emph{covariant}. The universal $\rC^*$-algebra generated by rigged representations of $(\A,X)$ is known to coincide with the Toeplitz algebra $\T_X$ \cites{muhly1998algebraic, pimsner1996class}, whereas the universal $\rC^*$-algebra generated by the rigged covariant representations is the so-called \emph{Cuntz--Pimsner algebra}, denoted by $\O_X$. In particular, the Cuntz--Pimsner $\rC^*$-algebra $\O_X$ is the quotient of $\T_X$ by the closed two-sided ideal generated by $\{\psi_t(\varphi_X(a)) - a : a\in J_X\}$.

Let $G$ be a locally compact Hausdorff group. A \emph{generalized gauge action} is a point-norm continuous action $\alpha:G\acts\T_X$ by $*$-automorphisms such that $\alpha_g(\A) = \A$ and $\alpha_g(X) = X$ for each $g\in G$. We will refer to the triple $((\A, X), G, \alpha)$ as a \emph{$\rC^*$-correspondence dynamical system}. A $\rC^*$-correspondence dynamical system gives rise to full and reduced crossed product $\rC^*$-correspondences (see for instance \cite[Section 2]{bedos2015new}). In this paper, we focus on the full crossed product of a $\rC^*$-correspondence dynamical system. Explicitly, the \emph{full crossed product $\rC^*$-correspondence} is the $\rC^*$-correspondence $(\A\rtimes_\alpha G, X\rtimes_\alpha G)$ obtained from the completion of $\rC_c(G; \A)$ and $\rC_c(G; X)$ in $\T_X\rtimes_\alpha G$. Here, the bimodule action is given by multiplication and the Hilbert module structure is defined by $\langle g,h\rangle = g^*h$ for $g,h\in\rC_c(G; X)$. Given a $\rC^*$-correspondence dynamical system $((\A, X), G, \alpha)$ and a pair of representations $(\rho_1, t_1)$ and $(\rho_2, t_2)$ for $(\A\rtimes_\alpha G, X \rtimes_\alpha G)$, we say a $*$-homomorphism between $\rC^*(\rho_1, t_1)$ and $\rC^*(\rho_2, t_2)$ is generator-preserving if it is the identity on the canonical copies of $\rC_c(G;\A)$ and $\rC_c(G; X)$.

In addition, we define the $\rC^*$-correspondence $(\A\hat{\rtimes}_\alpha G, X\hat{\rtimes}_\alpha G)$ as the completion of $\rC_c(G; \A)$ and $\rC_c(G; X)$ in $\O_X\rtimes_\alpha G$. We will see that in general, it can be the case that the canonical quotient $\A \rtimes_{\alpha} G \rightarrow \A \hat{\rtimes}_{\alpha} G$ fails to be injective. Similarly, the \emph{reduced crossed product $\rC^*$-correspondence} $(\A\rtimes_{\alpha, r}G, X\rtimes_{\alpha,r}G)$ is the completion of $\rC_c(G;\A)$ and $\rC_c(G; X)$ in $\T_X \rtimes_{\alpha, r}G$, with bimodule action as multiplication and by setting $\langle g,h\rangle = g^*h$ for $g,h\in\rC_c(G; X)$.

A central fact we require is that the $\rC^*$-envelope of the tensor algebra $\T_X^+$ is the Cuntz--Pimsner algebra $\O_X$ \cite{katsoulis2006tensor}. Here, we pay particular attention to the fact that the quotient mapping $\T_X\rightarrow \O_X$, which exists by universality, is completely isometric on $\T_X^+$. If $\T_X^+$ is hyperrigid, we refer to the $\rC^*$-correspondence $(\A, X)$ as \emph{hyperrigid}. The work of Kim \cite{kim2021hyperrigidity} shows that a $\rC^*$-correspondence $(\A, X)$ is hyperrigid if and only if Katsura's ideal $J_X$ acts non-degenerately on $X$, i.e.~ $\ol{\varphi_X(J_X)X} = X$. Finally, a $\rC^*$-correspondence $(\A, X)$ is said to be \emph{regular} if the left action is injective and maps into $\K(X)$. In this case, Katsura's ideal $J_X$ coincides with $\A$ and $(\A, X)$ is hyperrigid by \cite{kim2021hyperrigidity}.

\section{Nuclearity properties for tensor algebras} \label{s:nuc-prop}

In this section, we characterize Blecher and Duncan's non-self-adjoint nuclearity-related properties \cites{blecher2011nuclearity, blecher2011errata} within the context of tensor algebras of $\rC^*$-correspondences. Specifically, we characterize notions of exactness, the weak expectation property, and nuclearity for tensor algebras. For this, let $(\A, X)$ be a $\rC^*$-correspondence and recall that nuclearity (respectively, exactness) of the Toeplitz $\rC^*$-algebra $\T_X$ is equivalent to the nuclearity (exactness) of the coefficient $\rC^*$-algebra $\A$ \cite[Theorems 7.1 and 7.2]{katsura2004c}. We will obtain similar characterizations for the tensor algebra of a $\rC^*$-correspondence.

For this, we say that an operator algebra $\A$ is \emph{exact} if, whenever  
\[
    0\longrightarrow \B \longrightarrow \C \longrightarrow \D\longrightarrow0
\]
is an exact sequence of operator algebras where the quotient mapping is a complete quotient map and the embedding is completely isometric, the same is true of
\[
    0\longrightarrow \A\otimes\B \longrightarrow \A\otimes\C \longrightarrow \A\otimes\D\longrightarrow0.
\]
In \cite{blecher2011nuclearity}, this property was referred to as being OA-exact and it was shown that this coincides with the familiar notion of exactness for operator spaces \cite[Proposition 5.2 (4)]{blecher2011nuclearity}. However, in contrast to the operator space setting (see \cite[Proposition 6.5]{blecher2011nuclearity}), exactness of non-self-adjoint operator algebras is preserved under subalgebras \cite[Proposition 5.2]{blecher2011nuclearity}. Accordingly, we have an immediate characterization of exactness for tensor algebras.

\begin{proposition}\label{p:tens-exact}
    Let $(\A, X)$ be a $\rC^*$-correspondence. Then $\T_X^+$ is exact if and only if $\A$ is exact.
\end{proposition}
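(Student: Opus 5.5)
The plan is to use the two facts just recalled: exactness of operator algebras passes to (closed) subalgebras by \cite[Proposition 5.2]{blecher2011nuclearity}, and it agrees with operator space exactness, which for $\rC^*$-algebras is ordinary exactness. We also use that $\T_X$ is exact exactly when $\A$ is exact \cite[Theorems 7.1 and 7.2]{katsura2004c}.

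\emph{Sufficiency.} Suppose $\A$ is exact. Then $\T_X$ is an exact $\rC^*$-algebra by Katsura's theorem. The tensor algebra $\T_X^+$ is, by definition, a norm-closed subalgebra of $\T_X\subset\L(\F_X)$, so it is exact as an operator algebra by \cite[Proposition 5.2]{blecher2011nuclearity}. For this step we need that a $\rC^*$-algebra which is exact in the usual sense is OA-exact. This follows from \cite[Proposition 5.2 (4)]{blecher2011nuclearity}, which identifies OA-exactness with operator space exactness, together with the standard fact that operator space exactness of a $\rC^*$-algebra coincides with $\rC^*$-exactness (Kirchberg/Pisier).

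\emph{Necessity.} Suppose $\T_X^+$ is exact. The coefficient algebra $\A$ sits inside $\T_X^+$ as the closed subalgebra $\varphi_{\F_X}(\A)$. This copy is faithful because $\A$ acts faithfully on the summand $\A\subset\F_X$, and the embedding is completely isometric since it is a $*$-monomorphism. Heredity under subalgebras then makes $\A$ exact as an operator algebra. Passing back through operator space exactness gives that $\A$ is exact as a $\rC^*$-algebra. Both directions are therefore immediate once the ingredients are in place.

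The only point needing care is the compatibility of the notions of exactness, that is, that OA-exactness of a $\rC^*$-algebra is the same as $\rC^*$-exactness. I expect to handle this by citing \cite[Proposition 5.2 (4)]{blecher2011nuclearity} together with Kirchberg's theorem that a $\rC^*$-algebra is exact if and only if it is operator space exact (e.g.\ \cite{brown2008textrm}). With that in hand, the proposition follows.
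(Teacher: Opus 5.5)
Your proposal is correct and follows the paper's argument: the chain $\A\subset\T_X^+\subset\T_X$, heredity of operator-algebra exactness under subalgebras, and Katsura's theorem that $\T_X$ is exact if and only if $\A$ is. Your added remarks on matching OA-exactness with $\rC^*$-exactness, and on the faithful completely isometric copy of $\A$ inside $\T_X^+$, only spell out steps the paper leaves implicit.
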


\begin{proof}
    This follows from noting that $\A\subset\T_X^+\subset\T_X$ and that the Toeplitz algebra $\T_X$ is exact if and only if $\A$ is exact \cite[Theorem 7.1]{katsura2004c}.
\end{proof}

Recall that there are canonical conditional expectations $\bE:\T_X\rightarrow\A$ and $\bE:\T_X^+\rightarrow \A$ onto $\A$ \cite[Theorem 4.6.6 (2)]{brown2008textrm}. For the next results, we will need to gather a few facts about tensor products of operator algebras arising from $\rC^*$-correspondences.

\begin{lemma}\label{l:tens-max-incl}
    Let $(\A, X)$ be a $\rC^*$-correspondence and $\B$ be an arbitrary $\rC^*$-algebra. Then, the following statements hold.\begin{enumerate}[{\rm (i)}]
        \item The canonical inclusion $\A\otimes_{\max} \B\subset \T_X^+\otimes_{\max} \B$ is completely isometric.
        \item If there is a completely isometric isomorphism $\T_X^+\otimes_{\max} \B\cong\T_X^+\otimes \B$, then $\A\otimes_{\max} \B\cong \A\otimes \B$.
        \item We have completely isometric isomorphisms $\T_{X\otimes_{\max} \B}^+\cong \T_X^+ \otimes_{\max} \B$ and $\T_{X\otimes_{\max} \B}\cong \T_X \otimes_{\max} \B$ that are generator-preserving.
        \item The canonical inclusion $\T_X^+\otimes_{\max}\B\subset\T_X\otimes_{\max}\B$ is completely isometric.
    \end{enumerate}
\end{lemma}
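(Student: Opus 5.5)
The plan is to prove the four statements in the order (iii), (iv), (i), (ii). Statements (i) and (ii) reduce to (iii) and (iv) together with the conditional expectation $\bE$.

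For (iii), consider the $\rC^*$-correspondence $X\otimes_{\max}\B$ over $\A\otimes_{\max}\B$, obtained as the exterior tensor product of $X$ with $\B$ viewed as a correspondence over itself.
\begin{enumerate}[{\rm (1)}]
    \item Build maps in both directions and check universal properties. Given a rigged representation $(\rho,t)$ of $(\A\otimes_{\max}\B, X\otimes_{\max}\B)$, use non-degeneracy to extract commuting pieces. These are a rigged representation $(\rho_\A,t_X)$ of $(\A,X)$ and a $*$-representation $\rho_\B$ of $\B$, obtained via strict extensions to multiplier algebras, and $\rho_\B$ commutes with the range of $(\rho_\A,t_X)$.
    \item This yields a $*$-homomorphism $\T_X\otimes_{\max}\B\to \rC^*(\rho,t)$.
    \item Conversely, $\T_X\otimes_{\max}\B$ contains a rigged representation of $X\otimes_{\max}\B$, namely $x\otimes b\mapsto L_x\otimes b$ and $a\otimes b\mapsto a\otimes b$. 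This gives $\T_{X\otimes_{\max}\B}\to\T_X\otimes_{\max}\B$.
    \item Check that the two maps are mutually inverse on generators. This gives the $\rC^*$-isomorphism.
\end{enumerate}
For the tensor algebra statement I argue the same way with completely contractive representations. A completely contractive representation of $\T_X^+$ commuting with $\B$ extends to a rigged representation on a dilation, by the Muhly--Solel dilation theory, so universality matches. Here I use that $\T^+_{Y}$ is universal for completely contractive representations of the correspondence $Y$, and that a completely contractive representation of $Y=X\otimes_{\max}\B$ splits into commuting completely contractive representations of $\T_X^+$ and $\B$.

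For (iv), observe that by (iii) the inclusion $\T_X^+\otimes_{\max}\B\subset\T_X\otimes_{\max}\B$ is identified with the inclusion $\T^+_{X\otimes_{\max}\B}\subset\T_{X\otimes_{\max}\B}$. That inclusion is completely isometric by definition of the tensor algebra as a subalgebra of the Toeplitz algebra.

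For (i), I take the following steps.
\begin{enumerate}[{\rm (1)}]
    \item The composition $\A\otimes_{\max}\B\to\T_X^+\otimes_{\max}\B$ is completely contractive.
    \item For the reverse inequality, use the gauge-invariant conditional expectation $\bE\otimes\id$ on $\T_{X\otimes_{\max}\B}\cong\T_X\otimes_{\max}\B$. This is the canonical expectation onto the coefficient algebra $\A\otimes_{\max}\B$ of $X\otimes_{\max}\B$, which exists by the cited result for any correspondence.
    \item It is completely contractive and restricts to the identity on $\A\otimes_{\max}\B$.
    \item Combined with (iv), norms computed in $\T_X^+\otimes_{\max}\B$ dominate those in $\A\otimes_{\max}\B$.
\end{enumerate}

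For (ii), apply the expectation again. The minimal tensor product $\A\otimes\B$ sits completely isometrically in $\T_X^+\otimes\B$, since the spatial tensor product is injective. The hypothesis makes the norms on the algebraic tensor product $\A\odot\B$ agree in $\T_X^+\otimes_{\max}\B$ and $\T_X^+\otimes\B$. By (i), the first of these is the $\max$ norm on $\A\odot\B$, and the second is the $\min$ norm, so $\A\otimes_{\max}\B\cong\A\otimes\B$.

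The main obstacle is the non-self-adjoint half of (iii). The difficulty is identifying $\T_X^+\otimes_{\max}\B$, which is defined by the operator-algebra maximal tensor product, with the tensor algebra of $X\otimes_{\max}\B$. To handle it I would first show that $\T_X^+\otimes_{\max}\B$ is the closed subalgebra of $\rC^*_{\max}(\T_X^+)\otimes_{\max}\B$ generated by the two factors. This follows from the characterization of the maximal tensor product of an operator algebra with a $\rC^*$-algebra via commuting representations. I would then use that completely contractive representations of $\T_X^+$ dilate to Toeplitz representations, with the commuting $\B$-representation preserved by commutant lifting.
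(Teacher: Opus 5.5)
Your proposal is correct, but it obtains (i) by a different route from the paper.

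\textbf{The paper's route.} The paper proves (i) first and directly. Since $\bE\colon\T_X^+\to\A$ is a completely contractive \emph{homomorphism}, the maps $\pi_\A\circ\bE$ and $\pi_\B$ (restrictions of a faithful representation of $\A\otimes_{\max}\B$) have commuting ranges. The universal property of $\T_X^+\otimes_{\max}\B$ then gives a left inverse to $\A\otimes_{\max}\B\to\T_X^+\otimes_{\max}\B$. The paper then uses (i) in (iii), and (iv) follows from (iii).

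\textbf{Your route.} You prove (iii) and (iv) first. You then get (i) from the expectation of $\T_{X\otimes_{\max}\B}$ onto its coefficient algebra, which need only be completely positive and contractive.

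\textbf{Comparison.} The paper's route is lighter. It uses exactly the multiplicativity of $\bE$ on $\T_X^+$, which is what the non-self-adjoint maximal tensor product can exploit, since its universal property only sees commuting completely contractive homomorphisms. Your route makes (iii) independent of (i), but makes (i) depend on the dilation-theoretic half of (iii). That dependence is avoidable. For $c\in M_n(\A\odot\B)$ you have
$\|c\|_{\A\otimes_{\max}\B}=\|(\bE\otimes_{\max}\id)(c)\|\le\|c\|_{\T_X\otimes_{\max}\B}\le\|c\|_{\T_X^+\otimes_{\max}\B}\le\|c\|_{\A\otimes_{\max}\B}$.
This uses only two facts: ccp maps tensor over $\otimes_{\max}$ of $\rC^*$-algebras, and universality gives a completely contractive map $\T_X^+\otimes_{\max}\B\to\T_X\otimes_{\max}\B$.

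\textbf{A caution on (iii).} The Muhly--Solel commutant lifting theorem lifts single operators. It does not by itself give a $*$-representation of $\B$ on the dilation space. What you actually need is the following. Let $(\sigma,T)$ be a completely contractive representation of $(\A,X)$ commuting with $\sigma_\B(\B)$. Then the operators $I\otimes\sigma_\B(b)$ commute with the defect operator $(I-\tilde T^*\tilde T)^{1/2}$, so $\B$ passes to the minimal isometric dilation. Alternatively, skip dilation altogether. Check directly that restricting a completely contractive representation of $\T_X^+\otimes_{\max}\B$ to $X\otimes_{\max}\B$ gives a completely contractive map, and then invoke Muhly--Solel universality, as the paper does.
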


\begin{proof}
    (i): By the universal property of the maximal tensor product, there is a completely contractive homomorphism
    \[
        \lambda: \A\otimes_{\max}\B\rightarrow \T_X^+\otimes_{\max} \B
    \]
    that extends $\id\odot\id$. We verify that $\lambda$ is injective. To this end, let $\pi: \A\otimes_{\max}\B\rightarrow \bB(\H)$ be a faithful $*$-homomorphism, and let 
    \[
        \pi_\A:\A\rightarrow\bB(\H) \quad \text{and} \quad  \pi_\B:\B\rightarrow\bB(\H)
    \]
    denote the $*$-homomorphisms arising from the restriction maps. Now, define the completely contractive homomorphism
    \[
        \rho:=\pi_\A\circ\bE: \T_X^+\rightarrow \bB(\H)
    \]
    where $\bE:\T_X^+\rightarrow \A$ denotes the canonical conditional homomorphism onto the coefficient $\rC^*$-algebra $\A$. The universal property of $\T_X^+\otimes_{\max} \B$ allows us to define a completely contractive homomorphism
    \[
        \rho\otimes_{\max}\pi_\B: \T_X^+\otimes_{\max} \B\rightarrow \pi_\A(\A)\otimes_{\max}\pi_\B(\B)
    \]
    that extends $\rho\odot\pi_\B$. Thus, we have a commuting diagram
    \[\begin{tikzcd}
        \T_X^+\otimes_{\max} \B \arrow[r, "\rho\otimes_{\max}\pi_\B"] &[2em] \pi_\A(\A)\otimes_{\max} \pi_\B(\B) \arrow[d]\\
        \A\otimes_{\max} \B \arrow[u, "\lambda"] \arrow[r, "\pi"] & \bB(\H)
    \end{tikzcd}\]
    where $\pi$ is injective. By commutativity of the diagram, $\lambda$ is then completely isometric. In other words, the inclusion $\A\otimes_{\max} \B\subset \T_X^+\otimes_{\max} \B$ is completely isometric.

    (ii): Continuing from (i), for every $x\in \A\odot \B$, we now have that
    \[
        \|x\|_{\max} = \|\lambda(x)\|_{\T_X^+ \otimes_{\max}\B} = \|\lambda(x)\|_{ \T_X^+\otimes\B} = \| x\|_{\A\otimes\B} = \|x\|_{\min}.
    \]
    Therefore, the canonical map $\A\otimes_{\max}\B\rightarrow \A\otimes\B$ is isometric, as desired.
    
    {\rm (iii)}: We prove that $\T_X^+\otimes_{\max} \B$ and $\T_{X\otimes_{\max}\B}^+$ share the same universal property \cite{muhly1998tensor}. First note that if $\rho:\T_X^+\otimes_{\max} \B\rightarrow\bB(\H)$ is a completely contractive representation, then
    \[
        (\rho|_{\A\otimes_{\max} \B},\rho|_{X\otimes_{\max} \B}, \H)
    \]
    is a completely contractive representation of $(\A\otimes_{\max} \B, X\otimes_{\max} \B)$ by restriction and the application of (i). Conversely, suppose that $(\chi, t, \H)$ is a completely contractive representation of $(\A\otimes_{\max}\B, X\otimes_{\max}\B)$. Letting $\iota_\A: \A\rightarrow \A\otimes_{\max} \B$ and $\iota_X: X\rightarrow X\otimes_{\max} \B$ be the canonical completely isometric inclusions, we then have a corresponding representation $(\chi\circ\iota_\A, t\circ\iota_X, \H)$ of $(\A, X)$. So, there is a completely contractive representation $\pi:\T_X^+\rightarrow\bB(\H)$ that is induced from $(\chi\circ\iota_\A, t\circ\iota_X, \H)$. Moreover, the range of $\pi$ commutes with the range of $\pi':=\chi\circ\iota_\B$, where $\iota_\B:\B\rightarrow \A\otimes_{\max}\B$ denotes the other canonical inclusion. It then follows that there is a unique completely contractive representation $\pi\otimes_{\max}\pi'$ of $\T_X^+\otimes_{\max}\B$, and it must agree with $(\chi, t, \H)$ on $(\A\otimes_{\max}\B, X\otimes_{\max}\B)$. Thus, by coincidence of universal properties, we conclude that $\T_X^+\otimes_{\max}\B\cong\T_{X\otimes_{\max}\B}^+$. For the Toeplitz $\rC^*$-algebra analogue of this identification, we argue the same way by using rigged representations in place of completely contractive ones.
  
    {\rm (iv)}: This is immediate from the generator-preserving isomorphisms in {\rm (iii)} since
    \[
        \T_X^+\otimes_{\max} \B\cong\T_{X\otimes_{\max}\B}^+\subset \T_{X\otimes_{\max}\B}\cong\T_X\otimes_{\max}\B.
    \]
\end{proof}

We now turn our attention to the weak expectation property for operator algebras arising from $\rC^*$-correspondences. A $\rC^*$-algebra $\B$ is said to have the \emph{weak expectation property} (WEP) if $\B\otimes_{\max}\rC^*(\bF)\cong\B\otimes\rC^*(\bF)$ for every (equivalently, some) discrete free group $\bF$. Due to work of Kirchberg \cite[Proposition 1.1]{kirchberg1993non}, this is equivalent to the usual notion that concerns the existence of weak expectations \cite[Definition 3.6.7]{brown2008textrm}. We remark as well that a $\rC^*$-algebra $\B$ is nuclear precisely when it is exact and has the weak expectation property. We summarize the connection between tensor algebras and the weak expectation property as follows.

\begin{proposition}\label{p:tens-wep}
    Let $(\A, X)$ be a $\rC^*$-correspondence. Then, the following statements are equivalent.\begin{enumerate}[{\rm (i)}]
        \item $\A$ has the WEP.
        \item $\T_X$ has the WEP.
        \item We have $\T_X^+\otimes_{\max}\rC^*(\bF)\cong\T_X^+\otimes\rC^*(\bF)$ completely isometrically for every (equivalently, some) discrete non-abelian free group $\bF$.
    \end{enumerate}
\end{proposition}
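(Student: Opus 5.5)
I will prove the cycle (i)$\Rightarrow$(ii)$\Rightarrow$(iii)$\Rightarrow$(i), working with a fixed discrete non-abelian free group $\bF$. Since the WEP of a $\rC^*$-algebra can be tested with any one such free group, every condition becomes independent of the choice of $\bF$ once the cycle is closed for each $\bF$ separately.

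For (i)$\Rightarrow$(ii), I will use that $\T_X$ is built from $\A$ by operations that preserve the WEP. One route is to show that $\T_X$ embeds in, or is a weakly relatively injective subalgebra of, something with the WEP. A cleaner route uses Lemma \ref{l:tens-max-incl}(iii): it gives $\T_X\otimes_{\max}\rC^*(\bF)\cong\T_{X\otimes_{\max}\rC^*(\bF)}$ generator-preservingly. If $\A$ has the WEP, then $\A\otimes_{\max}\rC^*(\bF)=\A\otimes\rC^*(\bF)$. I then want $X\otimes_{\max}\rC^*(\bF)$ to coincide with the minimal (exterior) tensor correspondence $X\otimes\rC^*(\bF)$, since the Hilbert module norms are computed from inner products valued in the coefficient algebra, and these agree. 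It remains to know that $\T_{X\otimes\B}\cong\T_X\otimes\B$ for the spatial tensor product. This holds because the Fock module of $X\otimes\B$ is $\F_X\otimes\B$, and the creation operators are $L_x\otimes b$. Hence $\T_X\otimes_{\max}\rC^*(\bF)\cong\T_X\otimes\rC^*(\bF)$, which is the WEP for $\T_X$.

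(ii)$\Rightarrow$(iii) comes from Lemma \ref{l:tens-max-incl}(iv). The inclusion $\T_X^+\otimes_{\max}\rC^*(\bF)\subset\T_X\otimes_{\max}\rC^*(\bF)=\T_X\otimes\rC^*(\bF)$ is completely isometric, and the minimal tensor product is injective, so $\T_X^+\otimes\rC^*(\bF)\subset\T_X\otimes\rC^*(\bF)$ as well. Comparing the two norms on $\T_X^+\odot\rC^*(\bF)$ then gives (iii). (iii)$\Rightarrow$(i) is exactly Lemma \ref{l:tens-max-incl}(ii) with $\B=\rC^*(\bF)$, combined with Kirchberg's characterization of the WEP.

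The main obstacle is (i)$\Rightarrow$(ii), specifically identifying the module $X\otimes_{\max}\rC^*(\bF)$ with $X\otimes\rC^*(\bF)$. Here the module $X\otimes_{\max}\B$ is the closure inside $\T_X\otimes_{\max}\B$. My plan is to note that for a Hilbert module, the norm of $\sum x_i\otimes b_i$ is determined by the norm of its inner product $\sum \langle x_i,x_j\rangle\otimes b_i^*b_j$ in the coefficient algebra. In $\T_X\otimes_{\max}\B$ this element lies in $\A\otimes_{\max}\B$ by Lemma \ref{l:tens-max-incl}(i), where it equals its min-norm. If this becomes delicate, the fallback is to cite Katsura's result that the WEP, like nuclearity, passes from $\A$ to $\T_X$ via the Fock representation and the gauge-invariant structure.
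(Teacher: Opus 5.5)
Your proposal is correct and follows the paper's proof essentially step for step. For (i)$\Rightarrow$(ii) you identify $X\otimes_{\max}\rC^*(\bF)$ with $X\otimes\rC^*(\bF)$ through the inner product and combine this with Lemma \ref{l:tens-max-incl}(iii); for (ii)$\Rightarrow$(iii) you use Lemma \ref{l:tens-max-incl}(iv) together with injectivity of the minimal tensor product; for (iii)$\Rightarrow$(i) you use Lemma \ref{l:tens-max-incl}(ii) with Kirchberg's theorem. The differences are minor. You justify $\T_{X\otimes\B}\cong\T_X\otimes\B$ directly via $\F_{X\otimes\B}\cong\F_X\otimes\B$, where the paper cites \cite[Proposition 2.10]{dor2020classification}. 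Your fallback to Katsura is unnecessary, and the Katsura results the paper relies on concern nuclearity and exactness, not the WEP.
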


\begin{proof}
    {\rm (i)}$\Rightarrow${\rm (ii)}: Fix some discrete non-abelian free group $\bF$. Since $\A$ has the WEP, we have that $\A\otimes_{\max}\rC^*(\bF) = \A\otimes\rC^*(\bF)$ by \cite[Proposition 1.1]{kirchberg1993non}. For each $t\in X\odot\rC^*(\bF)$, we have $\|t\|^2 = \|\langle t,t\rangle\|$ where $\langle t, t\rangle\in \A\odot\rC^*(\bF)$. It follows that the norm of $t$ in $X\otimes_{\max}\rC^*(\bF)$ coincides with its norm in $X\otimes\rC^*(\bF)$. In other words, the canonical surjection $U:X\otimes_{\max}\rC^*(\bF)\rightarrow X\otimes\rC^*(\bF)$ fixing $X\odot\rC^*(\bF)$ is isometric. Thus, applying Lemma \ref{l:tens-max-incl} (iii) and \cite[Proposition 2.10]{dor2020classification}, we have
    \[
        \T_X\otimes_{\max}\rC^*(\bF)\cong\T_{X\otimes_{\max}\rC^*(\bF)}\cong \T_{X\otimes\rC^*(\bF)}\cong\T_X\otimes\rC^*(\bF)
    \]
    by a generator-preserving $*$-isomorphism. So, $\T_X$ has the WEP.
    
    {\rm (ii)}$\Rightarrow${\rm (iii)}: Upon applying Lemma \ref{l:tens-max-incl} (iii) and (iv), we see that
    \[
        \T_X^+\otimes_{\max}\rC^*(\bF)\cong\T_{X\otimes_{\max}\rC^*(\bF)}^+\subset\T_{X\otimes_{\max}\rC^*(\bF)}\cong\T_X\otimes_{\max}\rC^*(\bF)\cong\T_X\otimes\rC^*(\bF)
    \] 
    where the final $*$-isomorphism holds since $\T_X$ has the WEP. Since the maps are generator-preserving, it follows that $\T_X^+\otimes_{\max}\rC^*(\bF)\cong\T_X^+\otimes\rC^*(\bF)$.

    {\rm (iii)}$\Rightarrow${\rm (i)}: This follows immediately from Lemma \ref{l:tens-max-incl} {\rm (ii)} and Kirchberg's tensorial characterization of the WEP \cite{kirchberg1993non}.
\end{proof}

Finally, we consider nuclearity for tensor algebras. An operator algebra $\A$ is said to be \emph{$\rC^*$-nuclear} if $\A\otimes_{\max} \B = \A\otimes \B$ for every $\rC^*$-algebra $\B$. If, instead, one stipulates the tensorial condition across all non-self-adjoint operator algebras rather than only $\rC^*$-algebras, then this forces $\A$ itself to be a $\rC^*$-algebra \cite[6.1]{blecher2004operator}. We remark that $\rC^*$-nuclearity is generally more restrictive than its self-adjoint counterpart. For one, the example of Harris and Kim \cite[Theorem 5.6]{harris2019crossed} is a matrix algebra that is a nuclearity detector for $\rC^*$-algebras. Nevertheless, our results provide us access to a large class of $\rC^*$-nuclear operator algebras.

\begin{theorem}\label{t:tens-nuclear}
    Let $(\A, X)$ be a $\rC^*$-correspondence. Then $\T_X^+$ is $\rC^*$-nuclear if and only if $\A$ is nuclear.
\end{theorem}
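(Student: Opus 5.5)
The proof splits into the two implications. Both run along the same lines as Proposition \ref{p:tens-wep}, with an arbitrary $\rC^*$-algebra $\B$ in place of $\rC^*(\bF)$.

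For the forward direction, assume $\T_X^+$ is $\rC^*$-nuclear. Fix an arbitrary $\rC^*$-algebra $\B$. Then $\T_X^+\otimes_{\max}\B\cong\T_X^+\otimes\B$ completely isometrically, and Lemma \ref{l:tens-max-incl} (ii) gives $\A\otimes_{\max}\B\cong\A\otimes\B$. Since $\B$ was arbitrary, $\A$ is nuclear.

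For the converse, assume $\A$ is nuclear and fix a $\rC^*$-algebra $\B$. The plan has three steps.
\begin{enumerate}[(1)]
\item Since $\A\otimes_{\max}\B=\A\otimes\B$, the norms on $X\odot\B$ coming from $X\otimes_{\max}\B$ and $X\otimes\B$ agree. Indeed, $\|t\|^2=\|\langle t,t\rangle\|$ with $\langle t,t\rangle\in\A\odot\B$, and the two inner products coincide there. Hence the canonical surjection $X\otimes_{\max}\B\rightarrow X\otimes\B$ is a unitary of correspondences, compatible with the identification of coefficient algebras.
\item Invoking \cite[Proposition 2.10]{dor2020classification} together with Lemma \ref{l:tens-max-incl} (iii), we get generator-preserving identifications
\[
\T_X^+\otimes_{\max}\B\cong\T^+_{X\otimes_{\max}\B}\cong\T^+_{X\otimes\B}.
\]
The point is that isomorphic correspondences have generator-preservingly isomorphic tensor algebras (and Toeplitz algebras).
\item It remains to identify $\T^+_{X\otimes\B}$ with $\T_X^+\otimes\B$ completely isometrically and generator-preservingly. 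The cleanest route is to pass through Toeplitz algebras. By Lemma \ref{l:tens-max-incl} (iii) we have $\T_{X\otimes_{\max}\B}\cong\T_X\otimes_{\max}\B$, and $\T_X$ is nuclear by \cite[Theorem 7.2]{katsura2004c}, so $\T_X\otimes_{\max}\B=\T_X\otimes\B$. The minimal tensor product is injective, so $\T_X^+\otimes\B\subset\T_X\otimes\B$ completely isometrically. Likewise Lemma \ref{l:tens-max-incl} (iv) gives $\T_X^+\otimes_{\max}\B\subset\T_X\otimes_{\max}\B$ completely isometrically. Chaining these, the canonical map $\T_X^+\otimes_{\max}\B\rightarrow\T_X^+\otimes\B$ is the restriction of the identity $\T_X\otimes_{\max}\B=\T_X\otimes\B$, hence completely isometric.
\end{enumerate}

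In fact the argument in step (3) alone suffices, exactly as in (ii)$\Rightarrow$(iii) of Proposition \ref{p:tens-wep}, so steps (1)–(2) can be dropped. The cleanest proof of the converse is therefore: nuclearity of $\A$ gives nuclearity of $\T_X$, and then Lemma \ref{l:tens-max-incl} (iv) with injectivity of the minimal tensor product finishes.

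The main point needing care is that every identification is generator-preserving. This ensures the composite map is the canonical one fixing $\T_X^+\odot\B$, rather than an abstract isomorphism. Checking compatibility of the embedding in (iv) with the minimal-tensor embedding on elementary tensors $T\otimes b$ suffices, since these are dense.
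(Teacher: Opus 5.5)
Your proof is correct, and your forward direction is exactly the paper's. For the converse you take a different and more direct route.

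The paper splits nuclearity of $\A$ into exactness and the WEP. From Proposition \ref{p:tens-exact} it gets that $\T_X^+$ is exact, and from Proposition \ref{p:tens-wep} it gets $\T_X^+\otimes_{\max}\rC^*(\bF)\cong\T_X^+\otimes\rC^*(\bF)$. It then invokes Blecher--Duncan's criterion \cite[Proposition 5.7]{blecher2011nuclearity} to upgrade these two properties to $\rC^*$-nuclearity; that criterion is why the paper must note the approximate unit of $\T_X^+$.

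You instead run the argument of Proposition \ref{p:tens-wep} (ii)$\Rightarrow$(iii) with an arbitrary $\rC^*$-algebra $\B$ in place of $\rC^*(\bF)$. You use Katsura's theorem that $\T_X$ is nuclear \cite[Theorem 7.2]{katsura2004c}, the completely isometric inclusion of Lemma \ref{l:tens-max-incl} (iv), and injectivity of the spatial tensor product. The commuting square of canonical maps then forces $\T_X^+\otimes_{\max}\B\to\T_X^+\otimes\B$ to be completely isometric.

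What each route buys: yours is shorter, avoids the external non-self-adjoint result, and needs no approximate unit. The paper's route presents $\rC^*$-nuclearity of $\T_X^+$ as ``exact plus WEP'', mirroring the $\rC^*$-algebraic decomposition and reusing its two preceding propositions.

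As you note yourself, your steps (1)--(2) are unnecessary and can be discarded.
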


\begin{proof}
    If $\T_X^+$ is $\rC^*$-nuclear, then the conclusion follows from Lemma \ref{l:tens-max-incl} (ii). Conversely, suppose that $\A$ is nuclear. Then $\A$ is exact and so $\T_X^+$ is exact by Proposition \ref{p:tens-exact}. Furthermore, since $\A$ has the WEP, by Proposition \ref{p:tens-wep} we have $\T_X^+\otimes_{\max}\rC^*(\bF)\cong\T_X^+\otimes\rC^*(\bF)$ for every discrete free group $\bF$. Upon noting that $\T_X^+$ has an approximate unit coming from $\A$, the conclusion now follows from \cite[Proposition 5.7]{blecher2011nuclearity}.
\end{proof}

For more examples of $\rC^*$-nuclear operator algebras, one may consult \cite[Section 6]{blecher2011nuclearity} or \cite[Theorems 3.2 and 3.4]{paulsen1990tensor}. However, Theorem \ref{t:tens-nuclear} manages to actually recover many of the $\rC^*$-nuclear operator algebras that have been recorded.

We remark that nuclearity properties for non-self-adjoint algebras have been studied since the work of Paulsen and Power \cite{paulsen1990tensor}, where it was seen to be related to obtaining a commutant lifting theorem (see \cite[Proposition 2.5]{paulsen1990tensor}).

\section{The Hao--Ng isomorphism theorem for full crossed products}\label{s:Hao--Ng}

In this section, we offer several different structural obstructions to the Hao--Ng isomorphism for full crossed products. That is, we determine the general non-existence of a generator-preserving $*$-isomorphism
\[
    \O_X \rtimes_\alpha G \cong \O_{X\rtimes_\alpha G}.
\]
We start by describing Katsoulis and Ramsey's reduction of the problem \cite{katsoulis2021non}, and the broader role this question plays in non-self-adjoint operator algebra theory.

For this, let $(\A, G, \alpha)$ be a dynamical system. From the standpoint of non-self-adjoint operator algebra theory, Katsoulis and Ramsey's work reduces the Hao--Ng isomorphism for full crossed products to two separate problems when $(\A, G, \alpha)$ is a tensor algebra dynamical system. First, the validity of the commutation of the full crossed product of the $\rC^*$-envelope with its canonical subalgebra:
\begin{equation}\label{eq:comm}
    \rC^*_e(\A) \rtimes_\alpha G \cong \rC^*_e(\A \rtimes_{\rC^*_e(\A), \varepsilon, \alpha}G).
\end{equation}
Second, the independence of the full crossed product construction:
\begin{equation}\label{eq:ind}
    \A \rtimes_{\rC^*_e(\A), \varepsilon, \alpha} G \cong \A\rtimes_\alpha G.
\end{equation}
For the Hao--Ng isomorphism for full crossed products to be valid, Katsoulis and Ramsey proved that both statements are necessary when $\A = \T_X^+$ and the action $\alpha$ arises from a $\rC^*$-correspondence dynamical system \cite[Theorem 4.9]{katsoulis2021non}. Indeed, equations (\ref{eq:comm}) and (\ref{eq:ind}), respectively, correspond to the existence of generator-preserving $*$-isomorphisms
\[
    \O_X\rtimes_\alpha G\cong \O_{X\hat{\rtimes}_\alpha G} \qquad \text{and} \qquad \O_{X\hat{\rtimes}_\alpha G} \cong \O_{X\rtimes_\alpha G}.
\]
Further, for hyperrigid $\rC^*$-correspondences, Katsoulis and Ramsey proved that (\ref{eq:ind}) alone is equivalent to the Hao--Ng isomorphism for full crossed products.

In Subsection \ref{ss:O-inf}, we present a failure of (\ref{eq:comm}) for the tensor algebra arising from the $\rC^*$-correspondence $\ell^2(\mathbb{N})$ over $\mathbb{C}$, which yields the Cuntz $\rC^*$-algebra $\O_\infty$, by constructing an appropriate action for an \emph{arbitrary} non-amenable countable discrete group. More broadly, this is also the first example of a non-self-adjoint dynamical system that violates condition (\ref{eq:comm}), thereby resolving a question of Katsoulis.

Accordingly, in Subsection \ref{ss:hr-Hao--Ng}, we concern ourselves with the failure of (\ref{eq:ind}) for hyperrigid $\rC^*$-correspondences. Harris and Kim witnessed a strong failure of property (\ref{eq:ind}) for arbitrary operator algebra dynamical systems \cite[Theorem 5.6]{harris2019crossed}. We show that this failure can be witnessed among tensor algebra dynamical systems and uncover precise structural obstructions to (\ref{eq:ind}), both at the level of the $\rC^*$-correspondence as well as at the $\rC^*$-correspondence dynamical system.

\subsection{An obstruction for non-hyperrigid C*-correspondences}\label{ss:O-inf}

Here, we construct an example of an operator algebra dynamical system $(\A, G, \alpha)$ for which there is no generator-preserving $*$-isomorphism
\[
    \rC^*_e(\A) \rtimes_\alpha G  \cong \rC^*_e(\A\rtimes_{\rC^*_e(\A),\varepsilon,\alpha} G).
\]

To this end, recall that the $\rC^*$-correspondence $X =\ell^2(\bN)$ over $\A = \bC$ with the trivial left action, gives rise to the $\rC^*$-correspondence that yields Cuntz $\rC^*$-algebra $\O_X\cong \O_\infty$ via a canonical generator preserving isomorphism. In this case, Katsura's ideal $J_X = 0$ acts degenerately on $X$. Moreover, by \cite[Corollary 4.8]{katsoulis2021non}, we remark that the canonical quotient map $\T_X^+\rtimes_\alpha G \rightarrow \T_X^+ \rtimes_{\O_\infty, \varepsilon,\alpha}G$ is completely isometric for any choice of generalized gauge action $\alpha:G\acts(\A,X)$. In which case, we have that $\O_{X\rtimes_\alpha G} \cong \O_{X\hat{\rtimes}_\alpha G}$ by a generator-preserving $*$-isomorphism and, in particular, there exists a surjective $*$-homomorphism $\O_X \rtimes_\alpha G \rightarrow \O_{X \rtimes_\alpha G}$. Nevertheless, there is an action by arbitrary countable non-amenable discrete groups for which the Hao--Ng isomorphism for full crossed products fails.

\begin{theorem}\label{t:infinite-cuntz}
    Let $\A = \mathbb{C}$ and $X = \ell^2(\mathbb{N})$, so that $\mathcal{O}_X \cong \mathcal{O}_{\infty}$ canonically. For every countable non-amenable discrete group $G$, there is a generalized gauge action $\alpha:G\acts(\A, X)$ for which the induced surjective $*$-homomorphism $\O_X \rtimes_\alpha G \rightarrow \O_{X\rtimes_\alpha G}$ is not injective.
\end{theorem}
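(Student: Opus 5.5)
The plan is to exploit the fact that here $J_X=0$, so $\O_X=\T_X$ and no covariance relation is imposed on $\O_\infty$. I will then choose the action so that the crossed product correspondence $(\A\rtimes_\alpha G, X\rtimes_\alpha G)$ acquires a \emph{nonzero} Katsura ideal. In that case $\O_{X\rtimes_\alpha G}$ is a proper quotient of $\T_{X\rtimes_\alpha G}$, while $\O_X\rtimes_\alpha G=\T_X\rtimes_\alpha G$ will be shown to be all of $\T_{X\rtimes_\alpha G}$.

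\textbf{Construction.} Since $G$ is countably infinite, I identify $\ell^2(\bN)\cong \ell^2(G)\oplus\bC$. On this space $G$ acts by the unitaries $\lambda_g\oplus 1$, and trivially on $\A=\bC$. Since $\O_\infty$ is universal for isometries with orthogonal ranges, this gives a quasi-free, hence generalized gauge, action on $\T_X=\O_X$.

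\textbf{Identifying the crossed product correspondence.} Here $\A\rtimes_\alpha G=\rC^*(G)$. Computing with the generators $\xi u_g$ (inner products $u_g^*\langle\xi,\eta\rangle u_h$, left action $u_k\,\delta_s u_g=\delta_{ks}u_{kg}$), I expect
\[
X\rtimes_\alpha G\cong \bigl(\ell^2(G)\otimes \rC^*(G)\bigr)\oplus \rC^*(G)
\]
as right Hilbert $\rC^*(G)$-modules. Under this identification, $u_g$ acts as $\lambda_g\otimes u_g$ on the first summand and by left multiplication on the second.

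\textbf{Computing Katsura's ideal.} Since the second summand carries the left regular action of $\rC^*(G)$ on itself, $\varphi$ is injective. Moreover, the second summand is a copy of $\rC^*(G)=\K(\rC^*(G))$, so it only contributes compact operators. By Fell absorption, $\lambda\otimes u$ is unitarily equivalent to $\lambda\otimes 1$. Hence on the first summand $\varphi(a)$ corresponds to $\lambda(a)\otimes 1$. Since $G$ is infinite, this operator lies in $\K(\ell^2(G))\otimes\rC^*(G)$ only when $\lambda(a)=0$. Therefore
\[
J_{X\rtimes_\alpha G}=\ker\bigl(\rC^*(G)\to\rC^*_r(G)\bigr),
\]
which is nonzero precisely because $G$ is non-amenable.

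\textbf{Comparing the two algebras.} I show that the canonical map $\T_{X\rtimes_\alpha G}\to \T_X\rtimes_\alpha G=\O_X\rtimes_\alpha G$ is a generator-preserving isomorphism (Toeplitz Hao--Ng), by matching universal properties. A covariant pair $(\pi,u)$ for $(\T_X,G)$ integrates to a rigged representation of $(\rC^*(G),X\rtimes_\alpha G)$. Conversely, a rigged representation $(\rho,t)$ of the crossed product correspondence restricts to a representation of $\rC^*(G)$ (giving $u$) and, by taking multipliers, to a rigged representation of $(\bC,X)$ that is covariant with $u$. The surjection in the statement then factors as
\[
\O_X\rtimes_\alpha G\cong\T_{X\rtimes_\alpha G}\longrightarrow\O_{X\rtimes_\alpha G}.
\]
Its kernel contains $a-\psi(\varphi(a))$ for each $a\in J_{X\rtimes_\alpha G}$. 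This element is nonzero in the Toeplitz algebra: in the Fock representation it acts on the degree-zero summand $\rC^*(G)$ of $\F_{X\rtimes_\alpha G}$ as multiplication by $a$, whereas $\psi(\varphi(a))$ annihilates that summand. Taking any $a\neq 0$ in the kernel of $\rC^*(G)\to\rC^*_r(G)$ therefore gives a nonzero element of the kernel, so the map is not injective.

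\textbf{Main obstacle.} I expect the hardest step to be the careful identification of $X\rtimes_\alpha G$ and of $\K(X\rtimes_\alpha G)$. In particular, one has to justify rigorously that $\lambda(a)\otimes 1$ is compact on $\ell^2(G)\otimes\rC^*(G)$ only if $\lambda(a)=0$. For this I would slice with a state of $\rC^*(G)$, which reduces the claim to the fact that no nonzero element of $\rC^*_r(G)$ is compact for infinite $G$. The Toeplitz-level identification should be routine by comparison.
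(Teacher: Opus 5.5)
Your proposal is correct and follows essentially the same route as the paper. Both use the action $\lambda\oplus 1$ on $\ell^2(G)\oplus\bC$, identify $X\rtimes_\alpha G$ with the left action $u_g\mapsto U_g\otimes u_g$, apply Fell absorption on the $\ell^2(G)\otimes\rC^*(G)$ summand to get a nonzero Katsura ideal, and factor through $\O_X\rtimes_\alpha G\cong\T_{X\rtimes_\alpha G}$ using $J_X=0$. The differences are minor: you verify by hand what the paper cites (the Toeplitz-level isomorphism from \cite[Theorem 3.1]{bedos2015new}, and non-faithfulness of $\T_{X\rtimes_\alpha G}\to\O_{X\rtimes_\alpha G}$ via $\ker Q=\K(\F_{X\rtimes_\alpha G}J_{X\rtimes_\alpha G})$), and you compute $J_{X\rtimes_\alpha G}=\ker\lambda$ exactly. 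Only the inclusion $\ker\lambda\subseteq J_{X\rtimes_\alpha G}$ is needed, so the slicing argument you flag as the main obstacle is unnecessary for the theorem.
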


\begin{proof}
    By \cite[Theorem 3.1]{bedos2015new} and on account of the fact that $\O_X\cong\T_X$, it follows that there is a generator-preserving $*$-isomorphism $\T_{X\rtimes_\alpha G}\cong \O_X \rtimes_\alpha G$ for any choice of generalized gauge action $\alpha:G\acts(\bC,X)$. Therefore, the conclusion then reduces to showing that the canonical quotient map $Q$ from $\T_{X\rtimes_\alpha G}$ onto $\O_{X\rtimes_\alpha G}$ cannot be faithful. By \cite[Proposition 6.5]{katsura2004c}, we have that
    \[
        \ker Q = \K(\F_{X\rtimes_\alpha G} J_{X\rtimes_\alpha G}).
    \]
    Thus, it suffices to construct a generalized gauge action $\alpha:G\acts(\bC, X)$ for which the Katsura ideal $J_{X\rtimes_\alpha G}$ of $X\rtimes_\alpha G$ is non-zero.

    For this, we regard $X \cong \bC\xi_0 \oplus\ell^2(G)$ as right Hilbert $\bC$-modules where $\xi_0$ is some fixed unit vector. Then, we let $\{T_\xi : \xi\in X\}$ denote the generators of $\O_\infty$ arising from the $\rC^*$-correspondence $(\bC, X)$. In addition, let $1_G$ denote the unique extension of the trivial representation of $G$ to a $*$-homomorphism of $\rC^*(G)$, and $\{u_g: g\in G\}$ denote the canonical unitaries in $\rC^*(G)$.
    
    Now, consider the unitary representation
    \[
        U = 1_G \oplus \lambda: G\rightarrow \bB(\bC\xi_0 \oplus \ell^2(G)),
    \]
    and define a generalized gauge action $\alpha: G\acts(\bC, X)$ by $\alpha_g(\xi) = U_g\xi$ for every $\xi\in X$. This induces an action on $\O_X$ that is determined by $\alpha_g(T_\xi) = T_{U_g\xi}$ and, in particular, we have that $U_g\xi_0 = \xi_0$. As $\rC^*$-correspondences over $\rC^*(G)$, we have
    \[
        X\rtimes_\alpha G\cong (\bC\xi_0 \oplus \ell^2(G)) \otimes \rC^*(G)
    \]
    where the left action is given by
    \[
        \varphi: \rC^*(G)\rightarrow \L( (\bC\xi_0 \oplus \ell^2(G)) \otimes \rC^*(G)), \qquad u_g \mapsto U_g\otimes u_g.
    \]
    Indeed, it is easily verified that the map
    \[
        W: \rC_c(G, \bC\xi_0 \oplus\ell^2(G)) \rightarrow (\bC\xi_0 \oplus \ell^2(G)) \odot \rC^*(G), \quad f\mapsto \sum_{g\in G} f(g)\otimes u_g,
    \]
    extends to a surjective, isometric map  $W: X\rtimes_\alpha G \rightarrow (\bC\xi_0 \oplus \ell^2(G)) \otimes \rC^*(G)$ that preserves the left action and right Hilbert $\rC^*(G)$-module structure on $X \rtimes_\alpha G$.
    
    As $G$ is non-amenable, there is a positive operator $a\in\rC^*(G)$ such that $1_G(a)=1$ and $\lambda(a) = 0$. We claim that $a$ lies in $J_{X\rtimes_\alpha G}$. For this, note that $\varphi$ is faithful and so, $a\in(\ker\varphi)^{\perp} = \rC^*(G)$. To see that $\varphi(a)$ lies in $\K((\bC\xi_0 \oplus \ell^2(G)) \otimes \rC^*(G))$, observe that
    \[
        (\bC\xi_0 \oplus\ell^2(G))\otimes\rC^*(G) \cong (\bC\xi_0 \otimes \rC^*(G)) \oplus (\ell^2(G) \otimes \rC^*(G))
    \]
    as right Hilbert $\rC^*(G)$-modules. With respect to this decomposition, we have that
    \[
        (U\otimes u)(a) = (1_G\otimes u)(a)\oplus (\lambda\otimes u)(a).
    \]
    On the second summand, the unitary representation $\lambda \otimes u$ is unitarily equivalent to $\lambda \otimes 1$ by Fell's absorption principle \cite[Theorem 2.5.5]{brown2008textrm} and thus, $(\lambda\otimes u)(a) = 0$ since $\lambda\otimes u$ factors through $\rC^*_\lambda(G)$. On the summand $\bC\xi_0 \otimes \rC^*(G)\cong\rC^*(G)$, we have that $\varphi(a)$ acts as left multiplication by $a$ since $1_G(a) = 1$. Therefore, this establishes that
    \[
        \varphi(a) = \theta_{\xi_0\otimes a^{1/2}, \xi_0 \otimes a^{1/2}} \in\K((\bC\xi_0 \oplus \ell^2(G)) \otimes \rC^*(G)).
    \]
    In turn, we may conclude that $a\in J_{X\rtimes_\alpha G}$ as claimed and that $J_{X\rtimes_\alpha G}$ is non-zero. It then follows that $\O_X\rtimes_\alpha G\rightarrow\O_{X\rtimes_\alpha G}$ fails to be injective.
\end{proof}

The key technical property that we require in Theorem \ref{t:infinite-cuntz} is that Katsura's ideal $J_X$ vanishes. In particular, by Kim's theorem \cite[Theorem 1.1]{kim2021hyperrigidity}, our construction must arise from a non-hyperrigid $\rC^*$-correspondence. Nevertheless, despite the technicalities that are present in the above proof, we remark that this strategy can be adapted to broader families of non-hyperrigid $\rC^*$-correspondences. For instance, the works of Deaconu offer natural families of candidates built from group actions on graphs \cites{deaconu2018group} and unitary representations of groups \cite{deaconu2018cuntz}.

\begin{corollary}
    Let $G$ be a non-amenable countable discrete group. Then, there is a dynamical system $(\A, G, \alpha)$ such that the surjective $*$-homomorphism $\rC^*_e(\A) \rtimes_\alpha G \rightarrow \rC^*_e(\A \rtimes_{\rC^*_e(\A), \varepsilon,\alpha}G)$ is not injective.
\end{corollary}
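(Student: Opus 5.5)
We take $\A = \T_X^+$ for the $\rC^*$-correspondence $X = \ell^2(\bN)$ over $\bC$, equipped with the generalized gauge action $\alpha: G \acts (\bC, X)$ built in Theorem \ref{t:infinite-cuntz} from $U = 1_G \oplus \lambda$. This action restricts to an action of $G$ on $\T_X^+$ by completely isometric automorphisms. The plan is to identify both sides of the corollary with the algebras in Theorem \ref{t:infinite-cuntz}, and then quote that theorem.

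\textbf{Left-hand side.} By \cite{katsoulis2006tensor}, $\rC^*_e(\T_X^+) = \O_X \cong \O_\infty$. The induced action on the envelope is the action on $\O_X$ given by $T_\xi \mapsto T_{U_g \xi}$. Hence $\rC^*_e(\A) \rtimes_\alpha G = \O_X \rtimes_\alpha G$.

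\textbf{Right-hand side.} Here $\A \rtimes_{\rC^*_e(\A), \varepsilon, \alpha} G$ is the norm-closed subalgebra of $\O_X \rtimes_\alpha G$ generated by $\rC_c(G;\bC)$ and $\rC_c(G;X)$. This is exactly the tensor algebra of the correspondence $(\A \hat{\rtimes}_\alpha G, X \hat{\rtimes}_\alpha G)$. I would cite the Katsoulis--Ramsey identification \cite[Theorem 4.9]{katsoulis2021non} for two facts:
\begin{itemize}
\item this subalgebra is completely isometrically the tensor algebra $\T^+_{X \hat{\rtimes}_\alpha G}$;
\item its $\rC^*$-envelope is therefore $\O_{X \hat{\rtimes}_\alpha G}$.
\end{itemize}
This is precisely the correspondence between condition (\ref{eq:comm}) and the generator-preserving map $\O_X \rtimes_\alpha G \to \O_{X \hat{\rtimes}_\alpha G}$ recalled at the start of this section.

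Next we pass from the hatted correspondence to the full one. The discussion before Theorem \ref{t:infinite-cuntz}, using \cite[Corollary 4.8]{katsoulis2021non}, gives that the quotient $\T_X^+ \rtimes_\alpha G \to \T_X^+ \rtimes_{\O_\infty, \varepsilon, \alpha} G$ is completely isometric. Consequently $\O_{X \rtimes_\alpha G} \cong \O_{X \hat{\rtimes}_\alpha G}$ by a generator-preserving $*$-isomorphism. Putting the pieces together, the canonical surjection
\[
\rC^*_e(\A) \rtimes_\alpha G \to \rC^*_e(\A \rtimes_{\rC^*_e(\A), \varepsilon, \alpha} G)
\]
is identified with the generator-preserving surjection $\O_X \rtimes_\alpha G \to \O_{X \rtimes_\alpha G}$.

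Theorem \ref{t:infinite-cuntz} shows this surjection has nonzero kernel, since its kernel contains $\K(\F J)$ with $J \ni a \neq 0$. So the map is not injective.

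\textbf{Main obstacle.} The only delicate point is checking that the relevant maps really are the canonical generator-preserving ones, so the identifications are compatible. In particular we need the envelope-level map to agree with the map in Theorem \ref{t:infinite-cuntz}. Since every map involved fixes $\rC_c(G;\bC)$ and $\rC_c(G;X)$, and these generate the algebras involved, this compatibility follows from uniqueness of generator-preserving maps.
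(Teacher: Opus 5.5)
Your proposal is correct and follows the paper's argument. You take $\A=\T_X^+$ with the action from Theorem \ref{t:infinite-cuntz} and identify the left side with $\O_X\rtimes_\alpha G$ via \cite{katsoulis2006tensor}. You then identify the relative crossed product with the tensor algebra of a crossed product correspondence (using $J_X=0$ and \cite{katsoulis2021non}), so that its envelope is $\O_{X\rtimes_\alpha G}$, and invoke the theorem. Your detour through $X\hat{\rtimes}_\alpha G$ is harmless but unnecessary: here $\O_X=\T_X$, so $X\hat{\rtimes}_\alpha G$ literally coincides with $X\rtimes_\alpha G$.
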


\begin{proof}
    Let $\alpha:G\acts(\bC, \ell^2(\bN))$ denote the generalized gauge action given by Theorem \ref{t:infinite-cuntz}. We claim that the induced tensor algebra dynamical system $(\T_X^+, G, \alpha)$ satisfies the desired conclusion. Indeed, since $J_X=0$, we have that
    \[
        \T_X^+ \rtimes_{\O_\infty, \varepsilon,\alpha}G \cong \T_X^+ \rtimes_\alpha G \cong \T_{X\rtimes_\alpha G}^+
    \]
    by \cite[Theorem 4.7]{katsoulis2021non}. Therefore, by \cite{katsoulis2006tensor}, it follows that
    \[
        \rC^*_e(\T_X^+ \rtimes_{\O_X, \varepsilon,\alpha}G)\cong \rC^*_e(\T_{X\rtimes_\alpha G}^+)\cong\O_{X\rtimes_\alpha G}.
    \]
    On the other hand, by \cite{katsoulis2006tensor}, we have that $\rC^*_e(\T_X^+)\rtimes_\alpha G\cong \O_X\rtimes_\alpha G$. Therefore, the conclusion follows immediately from Theorem \ref{t:infinite-cuntz}.
\end{proof}

\subsection{Obstructions for hyperrigid C*-correspondences}\label{ss:hr-Hao--Ng}

In this subsection, we uncover failures of the Hao--Ng isomorphism for full crossed products of hyperrigid $\rC^*$-correspondences. To accomplish this, we invoke a strategy that is reminiscent of Harris and Kim's argument \cite[Theorem 5.6]{harris2019crossed}. In this direction, we first uncover constraints at the level of the $\rC^*$-correspondence for the validity of the Hao--Ng isomorphism for full crossed products against the trivial action (Theorem \ref{t:coeff-obs}). We then present an obstruction at the level of the coefficient algebra of the $\rC^*$-correspondence dynamical system (Theorem \ref{t:act-obs}), and conclude by constructing a class of examples where either of the given criteria are met (Examples \ref{e:tens} and \ref{e:top-graph}). 

We now extract our obstruction to the Hao--Ng isomorphism for trivial actions by hyperrigid $\rC^*$-correspondences. This highlights a restriction on the validity of the full Hao--Ng isomorphism at the level of the coefficient $\rC^*$-algebra of the $\rC^*$-correspondence. We highlight to the reader that, for hyperrigid $\rC^*$-correspondences, there always exists a generator-preserving surjective $*$-homomor\-phism $ \O_{X\rtimes_\alpha G} \rightarrow \O_X \rtimes_\alpha G$ on account of \cite[Theorem 3.6]{katsoulis2021non}.

\begin{theorem}\label{t:coeff-obs}
    Let $(\A, X)$ be a hyperrigid $\rC^*$-correspondence where $\O_X$ has the WEP, yet $\A$ does not have WEP. Let $\bF$ be a discrete non-abelian free group. Then, the canonical surjective $*$-homomorphism $ \O_{X\rtimes_{\id} \bF} \rightarrow\O_X\rtimes_{\id}\bF$ is not injective.
\end{theorem}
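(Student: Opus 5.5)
We argue by contradiction: suppose the canonical surjection $\O_{X\rtimes_{\id}\bF}\rightarrow\O_X\rtimes_{\id}\bF$ is injective. The plan is to identify both sides with tensor products by $\rC^*(\bF)$ and then push the resulting isomorphism down to the coefficient algebra $\A$, where it would force $\A$ to have the WEP.

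For the trivial action, the full crossed product by $\bF$ is the maximal tensor product with $\rC^*(\bF)$. Thus $\O_X\rtimes_{\id}\bF\cong\O_X\otimes_{\max}\rC^*(\bF)$, and since $\O_X$ has the WEP, this equals $\O_X\otimes\rC^*(\bF)$. On the correspondence side, $(\A\rtimes_{\id}\bF, X\rtimes_{\id}\bF)$ is the completion of $\A\odot\rC^*(\bF)$ and $X\odot\rC^*(\bF)$ inside $\T_X\otimes_{\max}\rC^*(\bF)$. By Lemma \ref{l:tens-max-incl} (i) and (iv), this is the correspondence $(\A\otimes_{\max}\rC^*(\bF), X\otimes_{\max}\rC^*(\bF))$. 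Hence $\O_{X\rtimes_{\id}\bF}\cong\O_{X\otimes_{\max}\rC^*(\bF)}$ via a generator-preserving isomorphism.

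The key step is to show that the coefficient algebra embeds faithfully into the Cuntz--Pimsner algebra on each side, and this is where hyperrigidity enters. Since $(\A,X)$ is hyperrigid, $J_X$ acts non-degenerately on $X$. I would check that the Katsura ideal of $X\otimes_{\max}\rC^*(\bF)$ contains $J_X\odot\rC^*(\bF)$, which still acts non-degenerately, and that the left action is injective on the relevant part. It follows that the coefficient algebra $\A\otimes_{\max}\rC^*(\bF)$ sits isometrically inside $\O_{X\otimes_{\max}\rC^*(\bF)}$, because the universal covariant representation of a correspondence is always injective on its coefficient algebra (Katsura). On the other side, $\A\subset\O_X$ gives an injective inclusion $\A\otimes\rC^*(\bF)\subset\O_X\otimes\rC^*(\bF)$, since the minimal tensor product is injective.

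If the surjection were injective, it would be a generator-preserving $*$-isomorphism. Restricting to the canonical copies of $\A\odot\rC^*(\bF)$ would then give $\|x\|_{\max}=\|x\|_{\min}$ for all $x\in\A\odot\rC^*(\bF)$. By Kirchberg's characterization, this means $\A$ has the WEP, a contradiction.

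The main obstacle I expect is the middle step: the precise identification of $X\rtimes_{\id}\bF$ with $X\otimes_{\max}\rC^*(\bF)$, and confirming that its coefficient algebra is genuinely $\A\otimes_{\max}\rC^*(\bF)$, not a quotient of it. I would handle this through Lemma \ref{l:tens-max-incl}, which realises $\A\otimes_{\max}\rC^*(\bF)$ isometrically inside $\T_X\otimes_{\max}\rC^*(\bF)\cong\T_X\rtimes_{\id}\bF$. Alternatively, one can invoke Katsoulis--Ramsey's reduction for hyperrigid correspondences and argue that the failure of (\ref{eq:ind}) is witnessed by $\T_X^+\otimes_{\max}\rC^*(\bF)\neq\T_X^+\otimes\rC^*(\bF)$, which holds by Proposition \ref{p:tens-wep} since $\A$ lacks the WEP.
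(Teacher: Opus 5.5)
Your main argument is correct, but it follows a different route from the paper.

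The paper stays in the non-self-adjoint framework. It uses \cite[Theorem 4.9]{katsoulis2021non} to reduce to the failure of (\ref{eq:ind}). It then identifies $\T_X^+\rtimes_{\id}\bF\cong\T_X^+\otimes_{\max}\rC^*(\bF)$ and, using the WEP of $\O_X$, $\T_X^+\rtimes_{\O_X,\varepsilon,\id}\bF\cong\T_X^+\otimes\rC^*(\bF)$, and concludes with Proposition \ref{p:tens-wep}.

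You work with coefficient algebras instead. The algebra $\A\rtimes_{\id}\bF=\A\otimes_{\max}\rC^*(\bF)$ embeds in $\O_{X\rtimes_{\id}\bF}$ because Katsura's universal covariant representation is injective on the coefficient algebra. The generator-preserving map sends it onto the closure of $\A\odot\rC^*(\bF)$ in $\O_X\otimes\rC^*(\bF)$, which is $\A\otimes\rC^*(\bF)$. So the kernel of $\A\otimes_{\max}\rC^*(\bF)\to\A\otimes\rC^*(\bF)$, nonzero by Kirchberg, lies in the kernel of the surjection.

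Your route is shorter, avoids Theorem 4.9 and tensor algebras, and exhibits kernel elements explicitly. It also shows hyperrigidity is needed only for the canonical surjection to exist; without it you still rule out any generator-preserving $*$-isomorphism. The paper's route instead presents the failure as a tensor-algebra instance of the Harris--Kim dependence of the full crossed product on the $\rC^*$-cover, which is the theme of that subsection. Your closing alternative is essentially the paper's proof, once you add the identification of $\T_X^+\rtimes_{\O_X,\varepsilon,\id}\bF$ with $\T_X^+\otimes\rC^*(\bF)$ via the WEP of $\O_X$.

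One correction. Hyperrigidity plays no role in the embedding step, and the checks you propose there are false in general. Drop them and rely only on Katsura's injectivity, which holds for every correspondence. A counterexample is Example \ref{e:tens}, where $J_X=\A$:
\begin{enumerate}
\item[(a)] The left action of $\A\otimes_{\max}\rC^*(\bF)$ on $X\otimes_{\max}\rC^*(\bF)$ is $\varphi\otimes_{\max}\id$ followed by left multiplication.
\item[(b)] On the $0$-th summand it factors through $\C\otimes_{\max}\rC^*(\bF)\to\C\otimes\rC^*(\bF)\subset\B\otimes\rC^*(\bF)$, because $\B$ has the WEP.
\item[(c)] Since $\C$ lacks the WEP, this map has a nonzero kernel $K$, so the new left action is not injective.
\item[(d)] Since $K$ is a nonzero ideal, some elements of $\C\odot\rC^*(\bF)\subset J_X\odot\rC^*(\bF)$ fail to annihilate it (for instance $1\otimes 1$ when $\C=\rC^*_\lambda(\bF_2)$). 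Hence $J_X\odot\rC^*(\bF)$ is not contained in the new Katsura ideal.
\end{enumerate}
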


\begin{proof}
    Since $(\A, X)$ is hyperrigid, by \cite[Theorem 4.9]{katsoulis2021non}, it suffices to prove that the canonical quotient map $\T_X^+ \rtimes_{\id} \bF \rightarrow \T_X^+ \rtimes_{\O_X, \varepsilon,\id}\bF$ fails to be completely isometric. By Proposition \ref{p:tens-wep}, the canonical surjection $\T_X^+\otimes_{\max}\rC^*(\bF) \rightarrow \T_X^+\otimes \rC^*(\bF)$ is not completely isometric. Now, note that $\T_X^+ \rtimes_{\id} \bF$ is completely isometrically isomorphic to $\T_X^+\otimes_{\max}\rC^*(\bF)$. Indeed, we have
    \[
        \rC^*_{\max}(\T_X^+)\rtimes_{\id} \bF \cong \rC^*_{\max}(\T_X^+) \otimes_{\max}\rC^*(\bF)
    \]
    since these $\rC^*$-algebras possess the same universal property. Thus, the restriction
    \[
        \Phi:\T_X^+\rtimes_{\id} \bF \rightarrow \T_X^+\otimes_{\max} \rC^*(\bF), \quad au_w\mapsto a\otimes u_w
    \]
    determines a completely isometric isomorphism.

    On the other hand, we claim that $\T_X^+\rtimes_{\O_X,\varepsilon,\id}\bF$ is completely isometrically isomorphic to $\T_X^+\otimes \rC^*(\bF)$. Indeed, since $\O_X$ has WEP, we have that 
    \[
        \O_X\otimes\rC^*(\bF)\cong\O_X\otimes_{\max}\rC^*(\bF) = \O_X\rtimes_{\id}\bF
    \]
    where the $*$-isomorphisms are the canonical ones. By injectivity of minimal tensor products, this then guarantees us that $\T_X^+\rtimes_{\O_X,\varepsilon,\id}\bF$ is completely isometrically isomorphic to $\T_X^+\otimes \rC^*(\bF)$, as desired. Consequently, since
    \[
        \T_X^+\rtimes_{\id}\bF \cong \T_X^+\otimes_{\max}\rC^*(\bF), \ \ \T_X^+\rtimes_{\O_X,\varepsilon,\id}\bF\cong \T_X^+\otimes\rC^*(\bF) 
    \]
    via generator-preserving isomorphisms, we conclude that the canonical completely contractive map $\T_X^+\rtimes_{\id}\bF \rightarrow \T_X^+\rtimes_{\O_X,\varepsilon,\id}\bF$ is not completely isometric.
\end{proof}

Our next goal is to reveal another hurdle to the full Hao--Ng isomorphism for hyperrigid $\rC^*$-correspondences. In accordance with Theorem \ref{t:coeff-obs}, this gives rise to a further obstruction to the isomorphism in terms of the coefficient $\rC^*$-algebra that arises from the $\rC^*$-correspondence dynamical system.

\begin{theorem}\label{t:act-obs}
    Let $X$ be a hyperrigid $\rC^*$-correspondence over a $\rC^*$-algebra $\A$, and let $\alpha : G \acts (\A,X)$ be a generalized gauge action. Then, the following hold.
    \begin{enumerate}[{\rm (i)}]
        \item If the canonical map $\A\rtimes_\alpha G\rightarrow \A \rtimes_r G$ is injective, then $\O_{X\rtimes_\alpha G} \cong \O_X \rtimes_\alpha G$.
        \item Assume that $\O_X\rtimes_\alpha G \rightarrow\O_X \rtimes_r G$ is injective, yet $\A\rtimes_\alpha G\rightarrow \A \rtimes_r G$ is not. Then, the canonical surjective $*$-homomorphism $\O_{X\rtimes_\alpha G} \rightarrow \O_X\rtimes_\alpha G$ is not injective.
    \end{enumerate}
\end{theorem}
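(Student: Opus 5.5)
The plan is to compare the three coefficient algebras $\A\rtimes_\alpha G$, $\A\hat{\rtimes}_\alpha G$ and $\A\rtimes_{\alpha,r}G$, and to exploit the fact that for hyperrigid correspondences the full Hao--Ng isomorphism reduces to whether the quotient $X\rtimes_\alpha G\to X\hat{\rtimes}_\alpha G$ is isometric. Since $\A\subset\T_X\to\O_X$ is injective (the conditional expectation $\bE$ is faithful on $\A$, and $\A$ embeds in $\O_X$ because hyperrigidity forces $J_X\cap\ker\varphi_X=0$, so the universal covariant representation is injective on $\A$), the inclusions $\A\subset\T_X$ and $\A\subset\O_X$ induce $*$-homomorphisms
\[
    \A\rtimes_\alpha G\longrightarrow \A\hat{\rtimes}_\alpha G\longrightarrow \A\rtimes_{\alpha,r}G,
\]
where the first map is the canonical surjection onto the closure of $\rC_c(G;\A)$ in $\O_X\rtimes_\alpha G$ (the corresponding subalgebra of $\T_X\rtimes_\alpha G$ is $\A\rtimes_\alpha G$ itself, since the full crossed product of a $\rC^*$-subalgebra need not embed, but by definition $(\A\rtimes_\alpha G,X\rtimes_\alpha G)$ is taken in $\T_X\rtimes_\alpha G$; I would first check, using the $\bE$-type expectation $\T_X\rtimes_\alpha G\to \A\rtimes_\alpha G$ coming from the gauge grading, that this closure really is the universal $\A\rtimes_\alpha G$). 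The second map exists because reduced crossed products respect inclusions: $\A\rtimes_{\alpha,r}G\subset\O_X\rtimes_{\alpha,r}G$, and $\O_X\rtimes_\alpha G\to\O_X\rtimes_{\alpha,r}G$ restricts to it.

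For (i): if $\A\rtimes_\alpha G\to\A\rtimes_{\alpha,r}G$ is injective, the composite above is injective, so $\A\rtimes_\alpha G\to\A\hat{\rtimes}_\alpha G$ is a $*$-isomorphism. Since norms in a Hilbert module are determined by inner products, $\|f\|^2=\|f^*f\|$ with $f^*f\in\rC_c(G;\A)$ for $f\in\rC_c(G;X)$, the correspondences $(\A\rtimes_\alpha G,X\rtimes_\alpha G)$ and $(\A\hat{\rtimes}_\alpha G,X\hat{\rtimes}_\alpha G)$ are unitarily isomorphic by a generator-preserving map, giving $\O_{X\rtimes_\alpha G}\cong\O_{X\hat{\rtimes}_\alpha G}$. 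Combined with the commutation $\O_X\rtimes_\alpha G\cong\O_{X\hat{\rtimes}_\alpha G}$ valid for hyperrigid correspondences (the content of \cite[Theorem 3.6 and 4.9]{katsoulis2021non}, i.e.\ equation (\ref{eq:comm})), this yields (i).

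For (ii): assume toward a contradiction that $\O_{X\rtimes_\alpha G}\to\O_X\rtimes_\alpha G$ is injective. Composing with the injective map $\O_X\rtimes_\alpha G\to\O_X\rtimes_{\alpha,r}G$ gives an injective generator-preserving map $\O_{X\rtimes_\alpha G}\to\O_X\rtimes_{\alpha,r}G$. Hyperrigidity of $(\A,X)$ passes to $(\A\rtimes_\alpha G,X\rtimes_\alpha G)$ in the sense that $J_{X\rtimes_\alpha G}$ contains $J_X\rtimes_\alpha G$ and acts nondegenerately; in particular $\ker\varphi_{X\rtimes_\alpha G}\cap J_{X\rtimes_\alpha G}=0$ and $\A\rtimes_\alpha G$ embeds in $\O_{X\rtimes_\alpha G}$. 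Restricting the injective composite to $\A\rtimes_\alpha G$ gives that $\A\rtimes_\alpha G\to\A\rtimes_{\alpha,r}G$ is injective, contradicting the hypothesis. The main obstacle is justifying the embedding $\A\rtimes_\alpha G\hookrightarrow\O_{X\rtimes_\alpha G}$, i.e.\ that the universal covariant representation of the crossed product correspondence is faithful on the coefficients; I would get this from Katsura's result that $\A\to\O_X$ is always injective for the Katsura ideal (injectivity of the coefficient map holds for every correspondence by \cite{katsura2004c}), which sidesteps the need for hyperrigidity at that step entirely.
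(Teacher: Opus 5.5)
Your proof is correct, but it takes a genuinely different route from the paper's.

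\textbf{The paper's route.} The paper works with the non-self-adjoint crossed products of $\T_X^+$.
\begin{enumerate}[(a)]
\item For (i), it uses \cite[Theorem 3.6]{bedos2015new} to see that $X\rtimes_\alpha G$ and $\T_X\rtimes_\alpha G$ are already reduced. It then uses \cite[Theorem 4.7]{katsoulis2021non} and \cite[Corollary 3.16]{katsoulis2019crossed} to squeeze $\T_X^+\rtimes_{\O_X,\varepsilon,\alpha}G$ between $\T_X^+\rtimes_\alpha G$ and $\T_X^+\rtimes_{\alpha,r}G$, and finishes with the hyperrigid case of \cite[Theorem 4.9]{katsoulis2021non}.
\item For (ii), it identifies $\T_X^+\rtimes_{\O_X,\varepsilon,\alpha}G$ with $\T_X^+\rtimes_{\alpha,r}G$. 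It then places the embedding $\A\rtimes_\alpha G\hookrightarrow\T_X^+\rtimes_\alpha G$ in a commuting square to show that (\ref{eq:ind}) fails.
\end{enumerate}

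\textbf{Your route.} You factor the regular representation as $\A\rtimes_\alpha G\to\A\hat{\rtimes}_\alpha G\to\A\rtimes_{\alpha,r}G$ and argue only with coefficient algebras and Cuntz--Pimsner algebras.
\begin{enumerate}[(a)]
\item In (i), injectivity forces $\A\rtimes_\alpha G=\A\hat{\rtimes}_\alpha G$. Since norms on $\rC_c(G;X)$ are determined by inner products, the two crossed product correspondences coincide. Only the hyperrigid input relating $\O_{X\hat{\rtimes}_\alpha G}$ to $\O_X\rtimes_\alpha G$ is then needed.
\item In (ii), you use Katsura's theorem that a coefficient algebra always embeds in its Cuntz--Pimsner algebra \cite{katsura2004c}. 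Restricting the canonical map to $\A\rtimes_\alpha G\subseteq\O_{X\rtimes_\alpha G}$ and following it by the injective map $\O_X\rtimes_\alpha G\to\O_X\rtimes_{\alpha,r}G$ gives the regular representation of $\A\rtimes_\alpha G$. So on the coefficient algebra, the canonical map kills exactly the kernel of $\A\rtimes_\alpha G\to\A\rtimes_{\alpha,r}G$.
\end{enumerate}

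\textbf{Comparison.} Your route is shorter and avoids \cite[Theorem 3.6]{bedos2015new} and all tensor-algebra identifications. It also proves slightly more:
\begin{enumerate}[(a)]
\item in (i), it suffices that $\A\rtimes_\alpha G\to\A\hat{\rtimes}_\alpha G$ be injective;
\item in (ii), hyperrigidity is used only to guarantee that the canonical surjection exists.
\end{enumerate}
The paper's route buys something conceptual. It locates the failure in (\ref{eq:ind}), that is, in the dependence of the full crossed product of $\T_X^+$ on the choice of $\rC^*$-cover. This is the Harris--Kim-type phenomenon the paper wants to exhibit.

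\textbf{Small corrections.}
\begin{enumerate}[(a)]
\item $J_X\cap\ker\varphi_X=0$ holds for every correspondence by the definition of Katsura's ideal, not because of hyperrigidity.
\item Your claim that $J_X\rtimes_\alpha G\subseteq J_{X\rtimes_\alpha G}$ acts non-degenerately is unjustified (full crossed products need not preserve injectivity of $J_X\to\K(X)$). As you note yourself, it is not needed.
\item The equivariant expectation $\T_X\to\A$ you want is the compression to the summand $\A$ of $\F_X$. The gauge grading alone only gives an expectation onto the fixed point algebra $\T_X^\gamma$, which is larger than $\A$.
\end{enumerate}
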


\begin{proof}
    (i): By \cite[Theorem 3.6]{bedos2015new}, the assumption implies that the canonical maps
    \[
        X \rtimes_\alpha G \longrightarrow X\rtimes_{\alpha, r}G \qquad \text{and} \qquad \T_X \rtimes_\alpha G \longrightarrow \T_X\rtimes_{\alpha, r}G
    \]
    are isometric. Upon combining this with \cite[Theorem 4.7]{katsoulis2021non}, we then have that
    \[
        \T_X^+ \rtimes_{\alpha, r} G \cong \T_X^+ \rtimes_{\T_X, \id, \alpha} G \cong \T_X^+ \rtimes_\alpha G \cong  \T_{X \rtimes_\alpha G}^+ \cong \T_{X\rtimes_{\alpha, r}G}^+.
    \]
    By \cite[Corollary 3.16]{katsoulis2019crossed}, there is a generator-preserving completely contractive map
    \[
        \sigma:\T_X^+ \rtimes_{\O_X,\varepsilon,\alpha}G \rightarrow \T_X^+ \rtimes_{\alpha, r} G.
    \]
    As $\T_X^+\rtimes_{\alpha,r}G\cong \T_X^+ \rtimes_{\T_X,\id,\alpha}G$, it follows that $\sigma$ has a completely contractive inverse and thus, is completely isometric. Therefore, the canonical map $\T_X^+ \rtimes_\alpha G \rightarrow \T_X^+ \rtimes_{\O_X, \varepsilon, \alpha} G$ is completely isometric. As $(\A, X)$ is hyperrigid, the conclusion now follows by \cite[Theorem 4.9]{katsoulis2021non}.
    
    (ii): As $(\A, X)$ is hyperrigid, it is sufficient to prove that the canonical quotient map $\T_X^+ \rtimes_\alpha G \rightarrow \T_X^+ \rtimes_{\O_X, \varepsilon, \alpha} G$ is not completely isometric by \cite[Theorem 4.9]{katsoulis2021non}. To this end, first note that $\T_X^+ \rtimes_{\alpha,r} G$ is completely isometrically contained in $\O_X \rtimes_{\alpha, r}G$ by \cite[Corollary 3.16]{katsoulis2019crossed}. Then, as the canonical map $\O_X \rtimes_\alpha G \rightarrow \O_X \rtimes_{\alpha, r}G$ is injective, it follows that $\T_X^+ \rtimes_{\O_X, \varepsilon,\alpha} G \cong \T_X^+ \rtimes_{\alpha, r}G$ completely isometrically.
    
    Now, by \cite[Theorem 4.7]{katsoulis2021non}, we have that the canonical $*$-homomorphism $\lambda: A\rtimes_\alpha G \rightarrow \T_X^+ \rtimes_\alpha G$ is injective. By injectivity of the reduced crossed product, there is then a commuting diagram of completely contractive homomorphisms
    \[\begin{tikzcd}
        \A\rtimes_\alpha G \arrow[r]  \arrow[d, "\lambda"] & \A \rtimes_{\alpha, r} G \arrow[d]\\
        \T_X^+ \rtimes_\alpha G \arrow[r] & \T_X^+ \rtimes_{\alpha, r} G
    \end{tikzcd}\]
    As the canonical quotient map $\A\rtimes_\alpha G \rightarrow \A\rtimes_r G$ fails to be injective, it follows that $\T_X^+ \rtimes_\alpha G \rightarrow \T_X^+ \rtimes_{\O_X,\varepsilon,\alpha} G$ is not injective either.
\end{proof}

Our final goal is to construct examples where the conditions of Theorem \ref{t:coeff-obs} and Theorem \ref{t:act-obs} (ii) are met. We remark that contemporaneous work of Mukohara and Suzuki \cite[Theorem 7.4]{mukohara2026stabilization} gives a family of $\rC^*$-correspondence dynamical systems satisfying Theorem \ref{t:act-obs} (ii). Our own counterexample shows that this failure occurs even among topological graphs. To this end, we recall a construction due to Katsura \cite[Example 7.7]{katsura2004c}, which is based on an idea communicated by Ozawa.

Given an inclusion $\C\subset \B$ of $\rC^*$-algebras, we construct a $\rC^*$-correspondence that we will denote by $(\A_{\B,\C}, X_{\B,\C})$. First, set $\A_{\B,\C} = \bigoplus_{n\in\bZ}\A_{\B,\C}(n)$ where
\[
    \A_{\B,\C}(n) =
    \begin{cases}
        \B, & n\geq 1\\
        \C, & n\leq 0.
    \end{cases}
\]
Let $\varphi:\A_{\B,\C}\rightarrow \A_{\B,\C}$ be an injective $*$-homomorphism where $\varphi|_{\A_{\B,\C}(0)}: \A_{\B,\C}(0)\rightarrow \A_{\B,\C}(1)$ is the inclusion map and, for each non-zero $n\in\bZ$, we have $\varphi|_{\A_{\B,\C}(n)}: \A_{\B,\C}(n)\rightarrow \A_{\B,\C}(n+1)$ is the identity map. We then define a $\rC^*$-correspondence $(\A_{\B,\C}, X_{\B,\C})$ with the action
\[
    \varphi_{X_{\B,\C}}: \A_{\B,\C}\rightarrow\L(X_{\B,\C})
\]
being given by the composition of $\varphi$ and the isomorphism $\A_{\B,\C}\cong\K(X_{\B,\C})$, where $X_{\B,\C} = \A_{\B,\C}$ is equipped with the natural right Hilbert $\A_{\B,\C}$-module structure.

We make a few remarks on this construction. First note that the left action $\varphi_{X_{\B,\C}}$ is injective and maps into $\K(X_{\B,\C})$ and so, $(\A_{\B,\C}, X_{\B,\C})$ is regular. In particular, by \cite[Theorem 1.1]{kim2021hyperrigidity}, we have that $(\A_{\B,\C}, X_{\B,\C})$ is a hyperrigid $\rC^*$-correspondence. Further, suppose that $\gamma:\bT\acts\O_{X_{\B,\C}}$ is the canonical gauge action and $\O_{X_{\B,\C}}^\gamma$ denotes the fixed point algebra. Then, by \cite[Proposition 5.7]{katsura2004c}, we have that
\[
    \O_{X_{\B,\C}}^\gamma \cong \varinjlim(\A_{\B,\C}, \varphi) \cong \bigoplus_{n\in\bZ}\B.
\]
We now leverage these facts to construct examples where the obstructions of Theorem \ref{t:coeff-obs} and \ref{t:act-obs} are witnessed.

\begin{example}\label{e:tens}
    Consider an inclusion of $\rC^*$-algebras $\C\subset \B$ where $\B$ has the weak expectation property, yet $\C$ does not. For example, $\rC^*_\lambda(\bF_2)$ does not have the WEP by \cite[Proposition 3.6.9]{brown2008textrm}, yet $B(\ell^2(\bF_2))$ has the WEP by \cite[Remark 15.2 (ii)]{pisier2003introduction}. Then, the $\rC^*$-correspondence $(\A, X) = (\A_{\B,\C}, X_{\B,\C})$ satisfies the constraints of Theorem \ref{t:coeff-obs}. Indeed, note that the fixed point algebra of the gauge action
    \[
        \O_X^\gamma \cong \bigoplus_{n\in\bZ} \B
    \]
    has the WEP, since it is a direct sum of $\rC^*$-algebras with WEP. In turn, it follows from the proof of \cite[Theorem 4.5.2]{brown2008textrm} (by replacing $B$ with $C^*(\mathbb{F})$ in the proof) that $\O_X$ has the WEP as well. However, note that $\A$ cannot have the WEP as $\C$ is a direct summand of $\A$ that fails to have the WEP. Therefore, by Theorem \ref{t:coeff-obs}, the canonical map $\O_{X\rtimes_{\id} \bF} \rightarrow \O_X\rtimes_{\id}\bF$ fails to be injective for every non-abelian free group $\bF$.
\end{example}

Finally, we construct examples that satisfy Theorem \ref{t:act-obs} (ii). For this, recall that a finitely generated group $G$ is \emph{hyperbolic} if its Cayley graph is a hyperbolic metric space. Note that free groups are hyperbolic \cite[Remark 5.3.7]{brown2008textrm}. While we do not require precise details, the reader may consult \cite[Section 5.3]{brown2008textrm} for background.

\begin{example}\label{e:top-graph}
    Let $G$ be a non-amenable hyperbolic group and $\partial G$ denote its Gromov boundary \cite[Definition 5.3.9]{brown2008textrm}. Recall that the action $\beta:G\acts\rC(\partial G)$ induced by left translation is amenable \cite[Theorem 5.3.15]{brown2008textrm}. We will construct a $\rC^*$-correspondence dynamical system from
    \[
        (\A, X) = (\A_{\rC(\partial G),\bC}, X_{\rC(\partial G),\bC}).
    \]
    
    Define a generalized gauge action $\alpha: G\acts(\A, X)$ by
    \[
        \alpha_g((a_n)_{n \in \mathbb{Z}}) = (\ldots, a_{-1}, a_0, \beta_g(a_1), \beta_g(a_2),\ldots), \ \ \text{for} \ \ (a_n)_{n\in\bZ}\in\A,\ g\in G,
    \]
    and which is defined the same on $X$. Observe that the action $\alpha: G\acts\A$ is non-amenable. Indeed, a one-dimensional direct summand of $\A$ is $\alpha$-invariant and, as $G$ is non-amenable, the restricted action on such a summand is non-amenable. In turn, the action $\alpha:G\acts\A$ must therefore be non-amenable \cite[Proposition 3.23]{buss2024amenability}. As $\A$ is commutative, it then follows that $\A\rtimes_\alpha G \rightarrow\A\rtimes_{\alpha, r} G$ cannot be injective \cite[Theorems 4.3.4 and 4.4.3]{brown2008textrm}.

    On the other hand, we claim that the induced action $\alpha:G\acts\O_X$ is amenable. Indeed, let $\gamma:\bT\acts\O_X$ denote the canonical gauge action and $\O_X^\gamma$ denote the fixed point algebra. Note that there is a well-defined action $\alpha\times\gamma: G\times\bT\acts\O_X$ as $\alpha$ and $\gamma$ commute. Since the action $G\acts\rC(\partial G)$ is amenable, it follows that the induced action $\alpha:G\acts\O_X^\gamma \cong \bigoplus_{n\in \mathbb{Z}} \rC(\partial G)$ is then amenable as well. Thus, by \cite[Theorem 5.1]{ozawa2021characterizations}, we deduce that the action $\alpha:G\acts\O_X$ is also amenable.
    
     By \cite[Theorem 4.3.4]{brown2008textrm}, it follows that the canonical $*$-homomorphism $\O_X\rtimes_\alpha G \rightarrow\O_X\rtimes_{\alpha, r} G$ is injective. In turn, by Theorem \ref{t:act-obs}, it follows that the quotient map $\O_{X\rtimes_\alpha G}\rightarrow \O_X\rtimes_\alpha G$ is not injective.
\end{example}

We remark that Example \ref{e:top-graph} also works for $(\A_{\rC(X),\bC}, X_{\rC(X), \bC})$ whenever there is an amenable action $\alpha:G\acts \rC(X)$ by a non-amenable group $G$. Furthermore, examples of this form demonstrate that the Hao--Ng isomorphism for full crossed products fails even for regular topological graph $\rC^*$-correspondences.

To see this, recall that a \emph{topological graph} is a tuple $(E^0, E^1, r, s)$ where $E^0, E^1$ are locally compact topological spaces, where the source and range maps $s,r: E^1\rightarrow E^0$ are continuous, and $s$ is a local homeomorphism. There is a natural $\rC_0(E^0)$-valued inner product on $\rC_c(E^1)$ given by
\[
    \langle \xi, \eta\rangle(v) = \sum_{e\in s^{-1}(v)} \ol{\xi(e)} \eta(e), \ \ \text{for} \ \  \xi,\eta\in\rC_c(E^1),
\]
and the completion $X(E)$ of $\rC_c(E^1)$ with respect to this inner product gives a Hilbert $\rC^*$-module. There is also a left action $\varphi$ of $\rC_0(E^0)$ on $X$ satisfying $\varphi(f)\xi(t) = f(r(t))\xi(t)$ for every $\xi\in\rC_c(E^1)$, which gives $(\rC_0(E^0), X(E))$ the structure of a $\rC^*$-correspondence. 

The $\rC^*$-correspondence $(\A_{\rC(X),\bC}, X_{\rC(X), \bC})$ then arises from a topological graph by setting $E^0$ and $E^1$ to be the Gelfand spectra of $\bigoplus_{n\in \mathbb{Z}} \A_{\rC(X), \bC}(n)$. Then, one takes the source map $s:E^1\rightarrow E^0$ to be the identity map and the range map $r:E^1\rightarrow E^0$ to be the continuous map dual to $\varphi_{X_{\rC(X),\bC}}$ under Gelfand duality.

\bibliographystyle{plain}
\bibliography{bbl}

\end{document}